\documentclass[11pt]{amsart}
\usepackage{txfonts}
\usepackage{mathrsfs}
\usepackage{amsmath}
\usepackage[pagewise]{lineno}
\allowdisplaybreaks[4]
\usepackage{}
\usepackage{xypic}
\usepackage{amsfonts}
\usepackage{amssymb}
\usepackage{bbm}

\usepackage[colorlinks=true, citecolor=blue]{hyperref}


\newtheorem{thm}{Theorem}[section]
\newtheorem{cor}[thm]{Corollary}
\newtheorem{lem}[thm]{Lemma}

\newtheorem{prop}[thm]{Proposition}
\newtheorem{rem}[thm]{Remark}
\numberwithin{equation}{section}

\newcommand{\be}{\begin{equation}}
\newcommand{\ee}{\end{equation}}
\newcommand{\bes}{\begin{eqnarray}}
\newcommand{\ees}{\end{eqnarray}}
\newcommand{\bess}{\begin{eqnarray*}}
\newcommand{\eess}{\end{eqnarray*}}

\newcommand{\bali}{\begin{align}}
\newcommand{\eali}{\end{align}}

\begin{document}
\title[Gauge invariants raised from commutators of unimodular Hopf algebras]{Gauge invariants raised from commutators of unimodular Hopf algebras}
%\thanks{This work was supported by }
\author{Zhihua Wang}
\address{Z. Wang\newline School of Statistics and Data Science, Taizhou University,
Taizhou 225300, China}
\email{mailzhihua@126.com}
%\author{Gongxiang Liu}
%\address{G. Liu\newline Department of Mathematics, Nanjing University,
%Nanjing 210093, China}
%\email{gxliu@nju.edu.cn}
\author{Libin Li}
\address{L. Li\newline School of Mathematical Science, Yangzhou University, Yangzhou 225002, China}
\email{lbli@yzu.edu.cn}
\date{}
\subjclass[2010]{16T05, 18D10}
\keywords{Hopf algebra, commutator, gauge invariant}

\begin{abstract}
This paper studies commutators in finite-dimensional unimodular Hopf algebras and invariants derived from them. For a unimodular Hopf algebra 
$H$ with the distinguished group-like element $g$, the commutator of $a,b\in H$ is defined by $\{a,b\}=a_{1}S^{2}(b_{1})S(a_{2})S^{-1}(b_{2})g^{-1}$. 
We show that the span of all commutators is a left coideal and generates a normal left coideal subalgebra $N$ of $H$ for which $H/HN^{+}$ is commutative.
The main result asserts that for any central element $a$ and any integral $\Lambda$ of $H$, the commutator $\{a,\Lambda\}$ and the value $\lambda(\{a,\Lambda\})$ for a right integral $\lambda$ of $H^*$ are invariant under twisting. This yields several gauge invariants, including ratios of characters of iterated commutators with $\Lambda$. In the semisimple characteristic-zero case, explicit descriptions of these invariants are given. As applications, we compute some invariants for the group algebra of a finite group, its Drinfeld double, and the restricted quantum group $\overline{U}_{q}(sl_{2})$.
\end{abstract}
\maketitle
\section{\bf Introduction}
Commutators originated over 100 years ago as a by-product of computing group characters of nonabelian groups. They are now an established and immensely useful
tool in all of group theory. Commutators became objects of interest in their own right soon after their introduction. 
The notation of a commutator for any Hopf algebra $H$ has been introduced by Cohen and Westreich in \cite{Cohen}. They defined Hopf-algebraic commutators by \begin{equation}\label{s}\{a,b\}= a_1b_1S(a_2)S(b_2) \end{equation} for all $a,b\in H$. If $H$ is semisimple, they showed in  \cite[Proposition 2.2]{Cohen} that the subalgebra $H'$ of $H$ generated by all commutators of $H$ coincides with the commutator subalgebra of $H$ defined in \cite{Bur} as the smallest normal left coideal subalgebra with the property that the corresponding quotient is a commutative Hopf algebra. They introduced central elements $z_n$ that recover classical counting functions for groups \cite{Cohen}; they further studied nilpotency via commutator matrices and probabilistic methods \cite{CW}. 

There is another way to define commutators for Hopf algebras. In this paper, for an arbitrary finite-dimensional unimodular Hopf algebra $H$, we define the commutator of $a,b\in H$ as
\begin{equation}\label{ss}
\{a,b\}:=a_1S^2(b_1)S(a_2)S^{-1}(b_2)g^{-1},
\end{equation}
where $g$ is the distinguished group-like element of $H$. If $H$ is semisimple, this is exactly the form of (\ref{s}). 

For the commutator given in (\ref{ss}), we study its invariant under twisting, where the twisted Hopf algebra $H^J$ shares the same algebra as $H$ but has deformed comultiplication and antipode (see e.g. \cite{AEGN, Drin}). For any central element $a\in H$ and any integral $\Lambda\in H$, we show that the commutator $\{a,\Lambda\}$ and the value $\lambda(\{a,\Lambda\})$ for a right integral $\lambda\in H^*$ are both invariant under twisting (Theorem \ref{t1}). In particular, the sequence $\{\Lambda^{\{m\}}\mid m>0\}$ is invariant under twisting, where  $\Lambda^{\{m\}}$ denotes the $m$-fold iterated commutator with respect to $\Lambda$. 

After that we use these twisting invariants to obtain several categorical gauge invariants of $H$ (Theorem \ref{prop112}). Under the assumption that $\chi_W(\Lambda^{\{m\}})\neq0$, we show that the ratios of values of characters on iterated commutators
\[
\frac{\chi_V(\Lambda^{\{m\}})}{\chi_W(\Lambda^{\{m\}})}
\quad \text{and}\quad
\frac{(\chi_{V}(\Lambda^{\{n\}}))^m}{(\chi_{W}(\Lambda^{\{m\}}))^n}
\]
are gauge invariants of $H$, for any finite-dimensional $H$-module $V$ and any positive integer $n$. Let $\Lambda^{[n]}$ be the $n$-th Sweedler power of $\Lambda$ for any integer $n$.
We also obtain that the ratio
\[
\frac{(\chi_V(\Lambda^{[n]}))^m}{\chi_W(\Lambda^{\{m\}})}
\]
is a  gauge invariant of $H$,  for any finite-dimensional $H$-module $V$.
 %This invariant generalizes earlier $n$-th indicator from the semisimple case to the unimodular setting.

If $H$ is a semisimple Hopf algebra over an algebraically closed field $\mathbbm{k}$ of characteristic 0, then $S^2=\operatorname{id}$ and the distinguished group-like element $g=1$. In this case, the commutator is reduced to be $\{a,b\}=a_1b_1S(a_2)S(b_2)$ and it has been studied in \cite{CW}, where $\Lambda^{\{m\}}$ is denoted to be $\gamma_{m-1}$. The expression of $\Lambda^{\{m\}}$ in terms of central primitive idempotents is given in \cite[Proposition 2.2]{CW}, where the coefficients are given by entries of so called commutator matrix. The commutators  $\Lambda^{\{m\}}$ can be used to determine the nilpotency of $H$. Under a minor assumption on the Grothendieck ring of $H$, the authors proved in \cite[Theorem 2.6]{CW} that $H$ is nilpotent if and only if $\gamma_m=1$ for some $m$. We refer to \cite[Section 2]{CW} for more details about commutators $\Lambda^{\{m\}}$. 

In the semisimple characteristic-zero case, we choose the integral $\Lambda\in H$ to be idempotent. In this case,  $\chi_V(\Lambda^{\{m\}})$ is a gauge invariant of $H$ for any  finite-dimensional $H$-module $V$ and any $m>0$. Moreover, we can express $\chi_V(\Lambda^{\{m\}})$ via the fusion rules of the representation category of $H$-modules (as a fusion category). Especially, we find out that  $$\chi_{\operatorname{reg}}(\Lambda^{\{m\}})=\chi_{\operatorname{ad}}(\Lambda^{\{m-1\}})\ \text{for}\ m>1,$$ where $\chi_{\operatorname{reg}}$ and $\chi_{\operatorname{ad}}$ are the characters of the (left) regular representation and the (left) adjoint representation of $H$ respectively.

As applications, we compute invariants explicitly for the group algebra of a finite group, the Drinfeld double of a finite group, and the restricted quantum group $\overline{U}_q(sl_2)$ at a primitive $2p$-th root of unity. For the latter, we obtain the values for $\chi_V(\Lambda^{\{2\}})$ and $\chi_{\operatorname{reg}}(\Lambda^{\{2\}})$ explicitly, where $V$ is a simple module of dimension $p$ generated by a highest weight vector. We find out that their ratio is $2p$,  a trivial invariant of  $\overline{U}_q(sl_2)$.

The paper is organized as follows. Section~2 recalls preliminaries on unimodular Hopf algebras, integrals, Radford's trace formulae and twists. Section~3 defines commutators and establishes their basic properties. Section~4 provides several twisting invariants and constructs the associated gauge invariants. Section~5 treats the semisimple case. Sections~6, 7, and~8 are devoted to examples: finite groups, Drinfeld doubles, and restricted quantum groups, respectively.

\section{\bf Preliminaries}
Throughout this paper, $H$ is a finite-dimensional unimodular Hopf algebra over a field $\mathbbm{k}$, with counit $\varepsilon$, comultiplication $\Delta$ and antipode $S$. We denote by $S^{-1}$ the inverse of $S$ under composition. We use the Sweedler notation $\Delta(a)=a_{1}\otimes a_{2}$ for $a\in H$. We denote by $\Lambda$ and $\lambda$ the left and right integrals of $H$ and $H^*$, respectively, normalized by $\lambda(\Lambda)=1$. We denote $g$ the distinguished group-like element of $H$ and $\alpha$ the distinguished group-like element of $H^*$, respectively. 
Since $H$ is unimodular, the left integral $\Lambda$ is also a right integral, equivalently, the distinguished group-like element $\alpha$ equals $\varepsilon$. In this case, the Radford's $S^4$-formula is $S^4(a)=gag^{-1}$ for $a\in H$. The center of $H$ is denoted by $Z(H)$. The category of finite-dimensional left $H$-modules is denoted by $H$-mod, which is a rigid tensor category. The characters of the (left) regular representation and the (left) adjoint representation of $H$ are denoted by $\chi_{\operatorname{reg}}$ and $\chi_{\operatorname{ad}}$ respectively.
We refer to \cite{Mon} for basic theory of Hopf algebras.

The following equalities are of fundamental importance for a  finite-dimensional unimodular Hopf algebra $H$. 
The first equality can be found in \cite[Lemma 1.2]{LR}:
\begin{equation}\label{equ004}S(a)\Lambda_{1}\otimes \Lambda_{2}=\Lambda_{1}\otimes a\Lambda_{2},\ \text{for}\ a\in H.\end{equation}
The second one can be found in \cite{KMN}:
\begin{equation}\label{equ005}\Lambda_{1}a\otimes \Lambda_{2}=\Lambda_{1}\otimes \Lambda_{2}S(a),\ \text{for}\ a\in H.\end{equation}
The third one can be found in \cite[Theorem 3(d)]{Rad}: 
\begin{equation}\label{0e}\Lambda_{2}\otimes \Lambda_{1}=\Lambda_{1}\otimes S^2(\Lambda_{2})g.\end{equation}
The integral $\lambda$ satisfies the following properties:
\begin{gather}
\label{equ4004}\lambda(ab)=\lambda(S^2(b)a)\ \text{for}\ a,b\in H,\\
\label{equ4004444}a=\lambda(a\Lambda_{1})S(\Lambda_{2})=\lambda(S(\Lambda_2)a)\Lambda_{1}\ \text{for}\ a\in H,
\end{gather}
where the former follows from \cite[Theorem 3(a)]{Rad} and the latter from \cite[Eq. (1)]{Rad}.
For any  $\mathbbm{k}$-linear map $f:H\rightarrow H$, the trace of $f$ can be described by Radford's trace formula (see \cite[Theorem 2]{Rad}): \begin{equation}\label{equ400444}\text{Tr}(f)=\lambda(S(\Lambda_{2})f(\Lambda_{1})).\end{equation}

Recall that a normalized twist for a finite-dimensional Hopf algebra $H$ is an invertible element $J\in H\otimes H$ that satisfies $(\varepsilon\otimes \operatorname{id})(J)=(\operatorname{id}\otimes\varepsilon)(J)=1$ and
\begin{equation}\label{qq1}(\Delta\otimes \operatorname{id})(J)(J\otimes1)=(\operatorname{id}\otimes\Delta)(J)(1\otimes J).\end{equation}
We write $J=J^{(1)}\otimes J^{(2)}$ and $J^{-1}=J^{-(1)}\otimes J^{-(2)}$, where the summation is understood. We also write $J_{21}=J^{(2)}\otimes J^{(1)}$.

Given a normalized twist $J$ for $H$, one can define a new Hopf algebra $H^{J}$. It has the same algebra structure and counit as $H$, while its comultiplication $\Delta^J$ and antipode $S^J$ are given respectively by
$$\Delta^J(a)=J^{-1}\Delta(a)J,\ \ \ \
S^J(a)=Q^{-1}_JS(a)Q_J\ \text{for}\ a\in H,$$
where $Q_J=S(J^{(1)})J^{(2)}$, which is invertible with the inverse  $Q_J^{-1}=J^{-(1)}S(J^{-(2)})$.

The element $Q_J$ satisfies the following identity (see \cite[Eq. (5)]{AEGN}):
\begin{equation}\label{equ002}
\Delta(Q_J)=(S\otimes S)(J^{-1}_{21})(Q_J\otimes Q_J)J^{-1}.
\end{equation}
This implies that
\begin{equation}\label{equu00003}
\Delta(Q_J^{-1})=J(Q_J^{-1}\otimes Q_J^{-1})(S\otimes S)(J_{21}).
\end{equation}
The twist $J$ of $H$ satisfies the following properties (see \cite[Lemma 2.4]{AEGN}):
\begin{gather}
\label{eqq11}J^{-(1)}\otimes S(J^{-(2)})Q_J=J^{(1)}_{(1)}\otimes S(J^{(1)}_{(2)})J^{(2)},\\
\label{eqq1}J^{(1)}\otimes S^{-1}(Q_J^{-1})S^{-1}(J^{(2)})=J^{-(1)}_{(1)}\otimes J^{-(2)}S^{-1}(J^{-(1)}_{(2)}).
\end{gather}

\section{\bf Commutators for Hopf algebras}

For a unimodular Hopf algebra $H$ over a field $\mathbbm{k}$,
we define the commutator of  $a,b\in H$ by
$$\{a,b\}:=a_{1}S^2(b_{1})S(a_{2})S^{-1}(b_{2})g^{-1}.$$
In particular, $\{1,1\}=g^{-1}$.
If $H$ is semisimple and $\operatorname{char}(\mathbbm{k})=0$, then $S^2=\mathrm{id}$ and $g=1$. In this case, the commutator is $\{a,b\}=a_{1}b_{1}S(a_{2})S(b_{2}),$ which has been investigated in \cite{Cohen, CW, CW1}. 

\begin{lem}\label{lem1}
Let $H$ be a unimodular Hopf algebra over a field $\mathbbm{k}$.
\begin{enumerate}
  \item For any integer $m$, $g^m$ is a power of the commutator $\{1,1\}$.
  \item For $a,b\in H$, $ab=\{a_1,S^{-2}(b_1)\}b_2S^4(a_2)g$. 
  \item For $h\in H$, $S(h_2)S^4(h_1)=\{S(h_2),S^2(h_1)\}g$.
\end{enumerate}
\end{lem}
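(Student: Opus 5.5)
All three parts follow by direct computation from the definition $\{a,b\}=a_1S^2(b_1)S(a_2)S^{-1}(b_2)g^{-1}$. We only need that $g$ is group-like, that $S$ is an anti-algebra and anti-coalgebra map, and Radford's formula $S^4(x)=gxg^{-1}$.

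For (1), setting $a=b=1$ gives $\{1,1\}=g^{-1}$. Hence $g^m=\{1,1\}^{-m}$ for every integer $m$. Since $H$ is finite-dimensional, $g$ has finite order $N$, so $g^{-1}=g^{N-1}$. Therefore every $g^m$ equals $\{1,1\}^k$ for some $k\ge 0$, which is a genuine nonnegative power of the commutator.

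For (2), expand the right-hand side with $\Delta(S^{-2}(b_1))=S^{-2}(b_1)\otimes S^{-2}(b_2)$, after relabelling Sweedler indices:
\[
\{a_1,S^{-2}(b_1)\}b_2S^4(a_2)g=a_1\,b_1\,S(a_2)\,S^{-3}(b_2)\,g^{-1}\,b_3\,S^4(a_3)\,g .
\]
Next use $g^{-1}b_3=S^{-4}(b_3)g^{-1}$ and $g^{-1}S^4(a_3)g=a_3$. This turns the tail into
\[
S^{-3}(b_2)\,S^{-4}(b_3)\,a_3=S^{-4}\bigl(S(b_2)b_3\bigr)a_3 .
\]
Since $S(b_2)b_3$ collapses by the antipode axiom, we get $b_1\otimes S(b_2)b_3=b\otimes 1$. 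The expression becomes $a_1\,b\,S(a_2)\,a_3$, and we need this to equal $ab$. That requires an identity of the form $S(a_2)a_3=\varepsilon(a_2)$, and such a collapse of $S(a_2)a_3$ is not automatic when $b$ sits in between. This is the main obstacle.

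If the collapse does not work as written, I would reorder the factors using the anti-coalgebra property. I would rewrite $S^{-3}(b_2)S^{-4}(b_3)$ as $S^{-4}\bigl(b_3 S(b_2)\bigr)$ or as $S^{-3}\bigl(S^{-1}(b_3)b_2\bigr)$, whichever matches the Sweedler pattern that collapses. I would handle the $a$-factors in the same way.

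The bookkeeping that must be checked carefully is which pair of $a$-legs collapses: $a_2$ with $a_3$, the pair flanking the $b$-terms. Once the $b$-part collapses to $b$, the $a$-factors $S(a_2)$ and $S^4(a_3)$ conjugated by $g$ become adjacent. Then $S(a_2)a_3=\varepsilon(a_2)$ gives $a_1 b\,\varepsilon(a_2)=ab$. So the order of operations is:
\begin{enumerate}
\item commute $g^{-1}$ past $b_3$ and $S^4(a_3)$;
\item collapse the $b$-legs to $b$;
\item collapse $S(a_2)a_3$.
\end{enumerate}
I expect the collapse of $S(a_2)a_3$ across $b$ to be the step where care is needed. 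If it fails, I would replace $S^{-2}(b_1)$ by the appropriate legs so that the $b$-collapse happens first.

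For (3), put $a=S(h_2)$ and $b=S^2(h_1)$ in the definition. Since $S$ reverses coproducts, $\Delta(S(h_2))=S(h_3)\otimes S(h_2)$ and $\Delta(S^2(h_1))=S^2(h_1)\otimes S^2(h_2)$ after reindexing. Substituting gives
\[
\{S(h_2),S^2(h_1)\}g=S(h_4)\,S^4(h_1)\,S^2(h_3)\,S(h_2),
\]
with the outer $g^{-1}g$ cancelling. The inner part $S^2(h_3)S(h_2)=S\bigl(h_2S(h_3)\bigr)$ collapses to $\varepsilon$. What remains is $S(h_2)S^4(h_1)$ with indices relabelled, which is the claim. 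Here the only care needed is consistent Sweedler indexing: the collapsing pair must be adjacent and in the order $h_i,S(h_{i+1})$.
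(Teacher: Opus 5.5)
Your proof is correct, and parts (1) and (2) follow the paper's computation line for line. In (2) the paper also turns $g^{-1}b_3S^4(a_3)g$ into $S^{-4}(b_3)a_3$, collapses the $b$-legs to reach $a_1bS(a_2)a_3$, and then collapses the $a$-legs. The ``obstacle'' you flag in (2) does not exist. In $a_1\,b\,S(a_2)\,a_3$ the factors $S(a_2)$ and $a_3$ are adjacent with consecutive indices, so $a_1\otimes S(a_2)a_3=a\otimes 1$ applies directly. It does not matter that $b$ sits between $a_1$ and $S(a_2)$. You can drop the contingency rewrites. One of them is actually wrong: $S^{-4}\bigl(b_3S(b_2)\bigr)=S^{-4}(b_3)S^{-3}(b_2)$, which is not $S^{-3}(b_2)S^{-4}(b_3)$.

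For (3) your route genuinely differs from the paper's. You expand $\{S(h_2),S^2(h_1)\}g$ from the definition to $S(h_4)S^4(h_1)S^2(h_3)S(h_2)$ and collapse $S\bigl(h_2S(h_3)\bigr)$. The paper instead specializes (2) to $a=S(h_2)$, $b=S^4(h_1)$, obtains $\{S(h_4),S^2(h_1)\}S^4\bigl(h_2S(h_3)\bigr)g$, and collapses there. Your computation is self-contained and never uses Radford's formula $S^4(x)=gxg^{-1}$, since the trailing $g^{-1}g$ simply cancels. So (3) is really the $g$-free identity $S(h_4)S^4(h_1)S^2(h_3)S(h_2)=S(h_2)S^4(h_1)$. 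The paper's derivation is shorter once (2) is available, and it presents (3) as an instance of the factorization $ab=\{a_1,S^{-2}(b_1)\}b_2S^4(a_2)g$.
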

\proof
(1) For any integer $m$, if $m=qr-s$, where $r$ is the order of $g$ and $0\leq s\leq r-1$, then $g^m=g^{-s}=\{1,1\}^{s}$.

(2)  For $a,b\in H$, we have
\begin{align*}
\{a_1,S^{-2}(b_1)\}b_2S^4(a_2)g&=a_1b_1S(a_2)S^{-3}(b_2)g^{-1}b_3S^{4}(a_3)g\\
&=a_1b_1S(a_2)S^{-3}(b_2)S^{-4}(b_3S^{4}(a_3))\\
&=a_1b_1S(a_2)S^{-3}(b_2)S^{-4}(b_3)a_3\\
&=a_1bS(a_2)a_3\\
&=ab.
\end{align*}

(3) It is a consequence of Part (2). Indeed, 
\begin{align*}
S(h_2)S^4(h_1)&=\{S(h_2)_1,S^{-2}(S^4(h_1)_1)\}S^4(h_1)_2S^4(S(h_2)_2)g\\
&=\{S(h_4),S^{2}(h_1)\}S^4(h_2)S^5(h_3)g\\
&=\{S(h_2),S^{2}(h_1)\}g.
\end{align*}
This completes the proof.
\qed

Recall that a left coideal of $H$ is a $\mathbbm{k}$-linear subspace $I\subseteq H$ such that $\Delta(I)\subseteq H\otimes I$.
Set $$\text{Com}=\text{span}_{\mathbbm{k}}\{\{a,b\}\mid a,b\in H\}.$$ 
\begin{lem}
Let $H$ be a unimodular Hopf algebra over a field $\mathbbm{k}$. Then $\operatorname{Com}$ is a left coideal of $H$. 
\end{lem}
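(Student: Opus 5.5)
We want to show $\Delta(\{a,b\})\in H\otimes \operatorname{Com}$. The plan is a direct computation of $\Delta(\{a,b\})$ from the definition, followed by a rearrangement of the left tensor factor so that the right tensor factor is visibly a sum of commutators.

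Since $\Delta$ is an algebra map, $S$ and $S^{\pm1}$ are anti-coalgebra maps, $S^2$ is a coalgebra map, and $g^{-1}$ is group-like, we get
\begin{align*}
\Delta(\{a,b\})&=a_1S^2(b_1)S(a_4)S^{-1}(b_4)g^{-1}\otimes a_2S^2(b_2)S(a_3)S^{-1}(b_3)g^{-1}\\
&=a_1S^2(b_1)S(a_4)S^{-1}(b_4)g^{-1}\otimes \{a_2,b_2\}.
\end{align*}
The second equality holds because the right factor is exactly $\{a_2,b_2\}$ once we recognise $a_2\otimes a_3$ and $b_2\otimes b_3$ as the coproducts of the middle Sweedler legs.

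To make this rigorous, we write it as the image of
\[
a_1\otimes a_2\otimes a_3\otimes b_1\otimes b_2\otimes b_3
\]
under the linear map
\[
x\otimes y\otimes z\otimes u\otimes v\otimes w\ \mapsto\ xS^2(u)S(z)S^{-1}(w)g^{-1}\otimes\{y,v\}.
\]
This uses coassociativity, namely $(\mathrm{id}\otimes\Delta\otimes\mathrm{id})$ applied to $a_1\otimes a_2\otimes a_3$, and similarly for $b$. The right factor is then a linear combination of commutators $\{y,v\}$ with $y,v\in H$, so it lies in $\operatorname{Com}$. Extending by linearity to the span gives $\Delta(\operatorname{Com})\subseteq H\otimes\operatorname{Com}$.

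The only step that needs care is the bookkeeping of Sweedler indices under the anti-coalgebra maps:
\[
\Delta(S(a_2))=S(a_3)\otimes S(a_2),\qquad \Delta(S^{-1}(b_2))=S^{-1}(b_3)\otimes S^{-1}(b_2).
\]
After expanding $\Delta$ of the product of four factors, the outer legs $a_1,b_1,a_4,b_4$ land in the left tensor factor and the inner legs $a_2,b_2,a_3,b_3$ land in the right one, in precisely the pattern defining $\{a_2,b_2\}$. No earlier identity such as Lemma \ref{lem1} is needed; the argument is pure coassociativity.
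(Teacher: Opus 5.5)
Your proof is correct and follows the same approach as the paper: expand $\Delta(\{a,b\})$ using that $S^{\pm1}$ are anti-coalgebra maps, $S^2$ is a coalgebra map and $g^{-1}$ is group-like, so the inner legs assemble into $\{a_2,b_2\}\in\operatorname{Com}$. One cosmetic point: in your displayed second line, once the inner legs are collapsed into $\{a_2,b_2\}$, the outer legs in the left factor should be re-indexed as $a_3,b_3$ (as the paper writes). Your explicit linear-map formulation already makes this precise.
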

\proof For $a,b\in H$, 
\begin{align*}
\Delta(\{a,b\})&=\Delta(a_{1}S^2(b_{1})S(a_{2})S^{-1}(b_{2})g^{-1})\\
&=a_{1}S^2(b_{1})S(a_{4})S^{-1}(b_{4})g^{-1}\otimes a_{2}S^2(b_{2})S(a_{3})S^{-1}(b_{3})g^{-1}\\
&=a_{1}S^2(b_{1})S(a_{3})S^{-1}(b_{3})g^{-1}\otimes \{a_2,b_2\}\in H\otimes \text{Com}.
\end{align*}
This completes the proof.
\qed

Let $N$ denote the subalgebra of $H$ generated by $\operatorname{Com}$. Since $\operatorname{Com}$ is a left coideal of $H$, it follows that $N$ is a left coideal subalgebra of $H$.
Note that $g^{-1}=\{1,1\}$. If the order of $g$ is $r$, then $1=\{1,1\}^r\in N$.

\begin{prop}Let $H$ be a unimodular Hopf algebra over a field $\mathbbm{k}$.
The left coideal subalgebra $N$ is normal, i.e., $N$ is stable under the left adjoint action of $H$.
\end{prop}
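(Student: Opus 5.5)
Since the left adjoint action $h\cdot x=h_1xS(h_2)$ satisfies $h\cdot(xy)=(h_1\cdot x)(h_2\cdot y)$, the subspace $\{x\in H\mid h\cdot x\in N \text{ for all } h\in H\}$ is a subalgebra. It therefore suffices to prove $h\cdot\operatorname{Com}\subseteq N$. Equivalently, for all $h,a,b\in H$ we want $h_1\{a,b\}S(h_2)\in N$. My plan is to rewrite $h_1\{a,b\}S(h_2)$ as a product of commutators, or as a sum of such products.

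The key device is to write the conjugate as a "commutator of a product times an inverse commutator", in the spirit of the group identity $x[a,b]x^{-1}=[xa,b][x,b]^{-1}$. First I will compute $\{h a, b\}$ directly from the definition:
\[
\{ha,b\}=h_1a_1S^2(b_1)S(a_2)S(h_2)S^{-1}(b_2)g^{-1}.
\]
This is close to $h_1\{a,b\}S(h_2)$, except that $S(h_2)$ sits in the wrong place relative to $S^{-1}(b_2)g^{-1}$. To move it, I will insert the factors of a commutator $\{1,\cdot\}$ or $\{\cdot,b\}$ that cancel, using the antipode identities $b_1S^{-1}(b_2)=\varepsilon(b)$ and $S^{-1}(b_2)b_1=\varepsilon(b)$, together with $S^4(x)=gxg^{-1}$ to commute $g$ past elements. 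Concretely, I expect an identity of the form
\[
h_1\{a,b\}S(h_2)=\{h_1a,b_1\}\,\{S^{-1}(\cdot),\cdot\}\cdots,
\]
where the remaining factor is a commutator of the shape $\{x, y\}$ with $x=S(h_2)$-type and $y$ a leg of $b$, possibly multiplied by powers of $g^{\pm1}$. The factors $g^{\pm1}$ lie in $N$ by Lemma~\ref{lem1}(1), using that $g^{-1}=\{1,1\}$ and that $1\in N$.

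An alternative route is to use Lemma~\ref{lem1}(2), which expresses an arbitrary product $xy$ through a commutator. Taking $x=h_1\{a,b\}$ and $y=S(h_2)$ gives $x y=\{x_1,S^{-2}(y_1)\}\,y_2S^4(x_2)g$. Expanding $\Delta(\{a,b\})$ via the left coideal computation gives a leg in $\operatorname{Com}$, and I will then try to show that the leftover factor $y_2S^4(x_2)g$ collapses. This collapse should use Lemma~\ref{lem1}(3), $S(h_2)S^4(h_1)=\{S(h_2),S^2(h_1)\}g$, which is exactly designed to turn an $S(h)\,S^4(h)$-type tail into a commutator times $g$.

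The main obstacle is the bookkeeping of the $S^{\pm1},S^{\pm2}$ and $g$ factors, so that everything telescopes into products of commutators with legs of $\Delta(\{a,b\})$ landing in $\operatorname{Com}$. I would first try the Lemma~\ref{lem1}(2)+(3) route, since those lemmas were evidently prepared for this purpose, and check the coproduct indices carefully.
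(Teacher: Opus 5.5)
Your preferred route (Lemma~\ref{lem1}(2) with $x=h_1\{a,b\}$, $y=S(h_2)$, then Lemma~\ref{lem1}(3) applied to $S(h_3)S^4(h_2)$) is exactly the paper's argument, and it yields $h_1\{a,b\}S(h_2)=\{h_1\{a,b\}_1,S^{-1}(h_4)\}\{S(h_3),S^2(h_2)\}g^2\{a,b\}_2\in N$. The tail does not fully collapse: you still need $gS^4(z)g=g^2z$ (from $S^4(z)=gzg^{-1}$), the left coideal property for $\{a,b\}_2\in\operatorname{Com}$, and $g^2\in N$; the group-theoretic first route you sketch is unnecessary.
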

\proof
Since $H$ is an $H$-module algebra under the left adjoint action, it is enough to check that $h_1\{x,y\}S(h_2)\in N$ for $h,x,y\in H$. Indeed, taking $h_1\{x,y\}$ as $a$ and $S(h_2)$ as $b$, and applying Lemma \ref{lem1}(2), we obtain
\begin{align*}
h_1\{x,y\}S(h_2)&=\{h_1\{x,y\}_1,S^{-1}(h_4)\}S(h_3)S^4(h_2)S^4(\{x,y\}_2)g\\
&=\{h_1\{x,y\}_1,S^{-1}(h_4)\}\{S(h_3),S^2(h_2)\}gS^4(\{x,y\}_2)g\ \text{by\ Lemma}\ \ref{lem1} (3)\\
&=\{h_1\{x,y\}_1,S^{-1}(h_4)\}\{S(h_3),S^2(h_2)\}g^2\{x,y\}_2\in N,
\end{align*}
where the last term belongs to $N$ since $g^2\in N$ and $\{x,y\}_2\in \operatorname{Com}$.
\qed

\begin{prop}Let $H$ be a unimodular Hopf algebra over a field $\mathbbm{k}$. The Hopf quotient $H/HN^{+}$ is commutative, where $N^+=N\bigcap \ker \varepsilon$. 
\end{prop}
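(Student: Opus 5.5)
Write $\pi:H\to \bar H:=H/HN^{+}$ for the canonical projection. First, $\bar H$ is a Hopf algebra: $N$ is a normal left coideal subalgebra by the preceding Proposition, so $HN^{+}$ is a Hopf ideal. Then, for $a,b\in H$, I will show that $\pi(a)\pi(b)=\pi(b)\pi(a)$. The tool is Lemma~\ref{lem1}(2), which writes a product $ab$ as a commutator times correction terms. If the commutator can be replaced by its counit modulo $HN^{+}$, the correction terms should collapse to $ba$.

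\textbf{Step 1: commutators reduce to scalars.} For $x\in N$ we have $x-\varepsilon(x)1\in N^{+}$ (here $1\in N$, as noted after the Lemma on $\operatorname{Com}$). Hence $\pi(x)=\varepsilon(x)\pi(1)$. Applied to a commutator this gives
\[
\pi(\{a,b\})=\varepsilon(\{a,b\})\,\pi(1)=\varepsilon(a)\varepsilon(b)\,\pi(1),
\]
because $\varepsilon(g^{-1})=1$. In particular $\pi(g)=\pi(1)$, since $g^{-1}=\{1,1\}\in N$ and $g$ is a power of $g^{-1}$.

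\textbf{Step 2: apply Lemma~\ref{lem1}(2).} Since $N^{+}$ sits on the right in $HN^{+}$, we need the commutator factor on the right of the product. To arrange this, apply Lemma~\ref{lem1}(2) not to the pair $(a,b)$ but to a suitably chosen pair. For instance, Lemma~\ref{lem1}(2) for a pair $(u,v)$ reads
\[
uv=\{u_1,S^{-2}(v_1)\}\,v_2S^4(u_2)g .
\]
The commutator appears on the left here. Because $N$ is normal, $HN^{+}=N^{+}H$ for a normal left coideal subalgebra of a finite-dimensional Hopf algebra, so left versus right does not matter. I expect this equality to hold by Skryabin/Takeuchi-type results; alternatively, it can be checked directly from the adjoint-stability proven above, via
\[
h x=(h_1xS(h_2))h_3 .
\]
Granting this, $HN^{+}$ is a two-sided ideal and $\pi$ is an algebra map. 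Step 1 then gives
\[
\pi(uv)=\pi(v\,S^4(u)\,g)=\pi(v)\,\pi(S^4(u)).
\]

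\textbf{Step 3: remove $S^4$ and conclude.} By Radford's formula $S^4(u)=gug^{-1}$, and $\pi(g)=\pi(1)$, so $\pi(S^4(u))=\pi(u)$. Therefore $\pi(uv)=\pi(v)\pi(u)$, which is the claimed commutativity.

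The main obstacle is Step 2: verifying that $HN^{+}$ is a two-sided ideal, i.e.\ that $N^{+}H\subseteq HN^{+}$. I would first try to prove $xh=h_2\,(S^{-1}(h_1)\cdot_{\mathrm{ad}}x)$-type identities using the right-sided version of normality. Failing that, I would invoke the standard fact that for a finite-dimensional Hopf algebra a normal left coideal subalgebra $N$ satisfies $HN^{+}=N^{+}H$.
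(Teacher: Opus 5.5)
Your argument is correct and is essentially the paper's proof: expand $ab$ via Lemma~\ref{lem1}(2), replace the commutator by its counit modulo $HN^{+}$, and finish with $S^4(a)g=ga$ and $\overline{g}=\overline{1}$. One caution: the identity $hx=(h_1xS(h_2))h_3$ only gives $HN^{+}\subseteq N^{+}H$, which is the opposite of the inclusion you need, so the two-sidedness of $HN^{+}$ must come from the finite-dimensional fact you cite (freeness of $H$ over $N$ on both sides gives $\dim HN^{+}=\dim N^{+}H$), and the paper simply presupposes this by calling $H/HN^{+}$ a Hopf quotient.
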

\proof For $a,b\in H$, it follows from Lemma \ref{lem1}(2) that 
\begin{align*}
\overline{ab}&=\overline{\{a_1,S^{-2}(b_1)\}b_2S^4(a_2)g}
=\overline{\{a_1,S^{-2}(b_1)\}} \cdot \overline{b_2S^4(a_2)g}\\
&=\varepsilon(\{a_1,S^{-2}(b_1)\})\overline{b_2S^4(a_2)g}
=\overline{bS^4(a)g}=\overline{bga}
=\overline{ba}\ \ \ \text{by\ Lemma \ref{lem1}(1)} 
\end{align*}
This completes the proof.
\qed

%If $\sigma\in G(H^{*})$, then  one computes that $\sigma\rightharpoonup\{a,b\}=\sigma(g^{-1})\{a,b\}$ for all $a,b\in H$. Indeed,
%\begin{align*}
%\sigma\rightharpoonup\{a,b\}&=\sigma(\{a,b\}_2)\{a,b\}_1\\
%&=\sigma(a_2S^2(b_2)S(a_3)S^{-1}(b_3)g^{-1})a_1S^2({b_1})S(a_4)S^{-1}(b_4)g^{-1}\\
%&=\sigma(S^2(b_2)S^{-1}(b_3)g^{-1})a_1S^2({b_1})S(a_2)S^{-1}(b_4)g^{-1}\\
%&=\sigma(S^2(b_2)g^{-1}S^{3}(b_3))a_1S^2({b_1})S(a_2)S^{-1}(b_4)g^{-1}\\
%&=\sigma(g^{-1})a_1S^2({b_1})S(a_2)S^{-1}(b_2)g^{-1}\\
%&=\sigma(g^{-1})\{a,b\}.
%\end{align*}

If $a\in Z(H)$, it is similar to \cite[Lemma 2.3]{Cohen} that $\{a,\Lambda\}\in Z(H)$. This is proved in the  following proposition:
\begin{prop}\label{p1}Let $H$ be a unimodular Hopf algebra over a field $\mathbbm{k}$ and $\Lambda$ an integral of $H$. 
\begin{enumerate}
  \item For any $a\in H$, we have $\{a,\Lambda\}=a_1\Lambda_2S(a_2)S^{-1}(\Lambda_1)$.
  \item If $a\in Z(H)$, then $\{a,\Lambda\}\in Z(H)$.
\end{enumerate}
\end{prop}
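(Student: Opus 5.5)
For part (1), I start from the definition $\{a,\Lambda\}=a_1S^2(\Lambda_1)S(a_2)S^{-1}(\Lambda_2)g^{-1}$ and rewrite the tensor factor $S^2(\Lambda_1)\otimes S^{-1}(\Lambda_2)g^{-1}$ using (\ref{0e}). That identity gives $\Lambda_2\otimes\Lambda_1=\Lambda_1\otimes S^2(\Lambda_2)g$, and applying $S^2\otimes S^{-1}$ to it I get
\[
S^2(\Lambda_2)\otimes S^{-1}(\Lambda_1)=S^2(\Lambda_1)\otimes S^{-1}(S^2(\Lambda_2)g)=S^2(\Lambda_1)\otimes g^{-1}S(\Lambda_2).
\]
It remains to pass $g^{-1}$ to the right. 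Radford's formula $S^4(x)=gxg^{-1}$ gives $g^{-1}S(\Lambda_2)=S^{-3}(\Lambda_2)g^{-1}$, which does not directly produce $S^{-1}(\Lambda_2)g^{-1}$. So I will instead run the computation backwards from the claimed form. Applying $S^2\otimes S^{-1}$ to (\ref{0e}) with the roles read the other way gives
\[
S^2(\Lambda_1)\otimes S^{-1}(\Lambda_2)=S^2(\Lambda_2)\otimes S^{-1}(S^2(\Lambda_1)g)\quad\text{(after relabeling)},
\]
and the right factor equals $g^{-1}S(\Lambda_1)$. I also expect to need $S^2(\Lambda)=\Lambda$, which holds in the unimodular case via $g$-conjugation and $S(\Lambda)=\Lambda$. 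Combining these with (\ref{0e}), written as $\Lambda_1\otimes\Lambda_2=S^{-2}(\Lambda_2)\otimes \Lambda_1 g^{-1}$ after inverting the roles, should turn $a_1S^2(\Lambda_1)S(a_2)S^{-1}(\Lambda_2)g^{-1}$ into $a_1\Lambda_2S(a_2)S^{-1}(\Lambda_1)$. The crux is to find the right-hand version of (\ref{0e}), namely $\Lambda_1\otimes\Lambda_2=\Lambda_2\otimes S^{-2}(\Lambda_1)g^{-1}$-type, and to use $g^{-1}S^{-1}(x)g=S^{-1}(g^{-1}xg)$ carefully so that the powers of $S$ and $g$ cancel. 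This bookkeeping is the main obstacle, and I would verify it first for a group algebra as a sanity check.

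For part (2), I use (1). Since $a$ is central, $a_1\otimes a_2$ satisfies $a_1x\otimes a_2=xa\ldots$; more usefully, I show $h\{a,\Lambda\}=\{a,\Lambda\}h$ by moving $h$ through the expression $a_1\Lambda_2S(a_2)S^{-1}(\Lambda_1)$. First, $h\,a_1\Lambda_2S(a_2)$: because $a$ is central, $\Delta(a)$ commutes with $\Delta(h)$, so $h a_1\otimes a_2 = $ (expanding $h=h_1\varepsilon(h_2)$ and using $\Delta(h)\Delta(a)=\Delta(a)\Delta(h)$) gives $ha_1\otimes S(a_2)=h_1a_1\otimes S(a_2)S(h_2)\ldots$. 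The clean route is to write
\[
h\{a,\Lambda\}=h_1a_1\Lambda_2S(a_2)\varepsilon(h_2)S^{-1}(\Lambda_1)
\]
and then use (\ref{equ004}) and (\ref{equ005}) to slide $h$-factors across $\Lambda_1\otimes\Lambda_2$: (\ref{equ005}) gives $\Lambda_1\otimes x\Lambda_2 = S(x)\Lambda_1\otimes \Lambda_2$ (from (\ref{equ004})), and hence $S^{-1}(\Lambda_1)$ picks up $x$ on the right. Concretely, I insert $h_1a_1\otimes a_2h_2\ldots$ via centrality of $\Delta(a)$ against $\Delta(h)$, then move $h_2$-parts from the left of $\Lambda_2$ to the right of $S^{-1}(\Lambda_1)$ using (\ref{equ004}). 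This should produce $a_1\Lambda_2S(a_2)S^{-1}(\Lambda_1)h$. This mirrors \cite[Lemma 2.3]{Cohen}, with the twisted antipode powers absorbed by the form obtained in (1).
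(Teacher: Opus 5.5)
\textbf{Part (1) has a genuine gap; part (2) is the paper's argument in outline.} In part (1) the computation that constitutes the proof is never carried out (you defer it as ``the main obstacle''), and two of the identities you intend to use are false. Reading (\ref{0e}) ``with the roles the other way'' or ``after inverting the roles'' amounts to swapping the Sweedler indices of $\Delta(\Lambda)$. That is not allowed, since $\Delta(\Lambda)$ need not be cocommutative.

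For example, combine your $\Lambda_1\otimes\Lambda_2=S^{-2}(\Lambda_2)\otimes\Lambda_1g^{-1}$ with (\ref{0e}) and $(S^2\otimes S^2)\Delta(\Lambda)=\Delta(\Lambda)$. This gives $\Lambda_1\otimes\Lambda_2=S^{-4}(\Lambda_1)\otimes\Lambda_2$. By (\ref{equ4004444}) every element of $H$ has the form $f(\Lambda_2)\Lambda_1$ with $f\in H^*$, so this would force $S^4=\operatorname{id}$, which fails for $\overline{U}_q(sl_2)$ with $p\ge 3$.

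Likewise, your ``relabeled'' formula $S^2(\Lambda_1)\otimes S^{-1}(\Lambda_2)=S^2(\Lambda_2)\otimes S^{-1}(S^2(\Lambda_1)g)$ is equivalent to $\Lambda_1\otimes\Lambda_2=\Lambda_2\otimes S^2(\Lambda_1)g$. Together with (\ref{0e}) this gives $\Lambda_1\otimes\Lambda_2=\Lambda_1\otimes g\Lambda_2g$, which by the same argument forces $g$ to be central with $g^2=1$. A group-algebra sanity check cannot detect any of this, because there $g=1$, $S^2=\operatorname{id}$ and $\Delta(\Lambda)$ is cocommutative.

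\textbf{The missing step is short.} The paper flips (\ref{0e}) to $\Lambda_1\otimes\Lambda_2=S^2(\Lambda_2)g\otimes\Lambda_1$. Using $S^4(x)=gxg^{-1}$, this gives $\{a,\Lambda\}=a_1S^4(\Lambda_2)gS(a_2)S^{-1}(\Lambda_1)g^{-1}=a_1(g\Lambda_2)S(a_2)S^{-1}(g\Lambda_1)$. Then $g\Lambda_1\otimes g\Lambda_2=\Delta(g\Lambda)=\Delta(\Lambda)$ removes the $g$'s.

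In fact the first identity you derived, $S^2(\Lambda_2)\otimes S^{-1}(\Lambda_1)=S^2(\Lambda_1)\otimes g^{-1}S(\Lambda_2)$, already suffices. Since $S^{-1}(x)g^{-1}=g^{-1}S^3(x)$ and $(S^2\otimes S^2)\Delta(\Lambda)=\Delta(\Lambda)$, one has $S^2(\Lambda_1)\otimes S^{-1}(\Lambda_2)g^{-1}=\Lambda_1\otimes g^{-1}S(\Lambda_2)$. Applying $S^{-2}\otimes\operatorname{id}$ to your identity shows this equals $\Lambda_2\otimes S^{-1}(\Lambda_1)$, which is exactly (1). The correctly inverted form $\Lambda_1\otimes\Lambda_2=\Lambda_2\otimes S^{-2}(\Lambda_1)g^{-1}$ that you name at the end also works.

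For part (2), make the insertion explicit. Write $h\{a,\Lambda\}=h_1a_1\Lambda_2S(h_2a_2)h_3S^{-1}(\Lambda_1)$. Use centrality to replace $h_1a_1\otimes h_2a_2$ by $a_1h_1\otimes a_2h_2$. Move the resulting $S(h_2)$ from the right of $\Lambda_2$ onto $\Lambda_1$ by (\ref{equ005}); it then cancels against $h_3$. Finally move $h$ from the left of $\Lambda_2$ by (\ref{equ004}). This is exactly the paper's computation.
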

\proof (1) Note that $S^4(a)=gag^{-1}$ for any $a\in H$. We have
\begin{align*}\{a,\Lambda\}&=a_{1}S^2(\Lambda_{1})S(a_{2})S^{-1}(\Lambda_{2})g^{-1}
=a_{1}S^4(\Lambda_{2})gS(a_{2})S^{-1}(\Lambda_{1})g^{-1}\ \text{by}\ (\ref{0e})\\
&=a_{1}g\Lambda_{2}S(a_{2})S^{-1}(\Lambda_{1})g^{-1}
=a_{1}(g\Lambda_{2})S(a_{2})S^{-1}(g\Lambda_{1})\\
&=a_{1}\Lambda_{2}S(a_{2})S^{-1}(\Lambda_{1}).
\end{align*}

(2) If $a\in Z(H)$, then for all $h\in H$ we have $$h_{1}a_{1}\otimes h_{2}a_{2}=a_{1}h_{1}\otimes a_{2}h_{2}.$$
 Using Part (1), we have
\begin{align*}
h\{a,\Lambda\}
&=ha_{1}\Lambda_{2}S(a_{2})S^{-1}(\Lambda_{1})
=h_1a_{1}\Lambda_{2}S(S^{-1}(h_3)h_2a_{2})S^{-1}(\Lambda_{1})\\
&=a_1h_1\Lambda_{2}S(S^{-1}(h_3)a_2h_2)S^{-1}(\Lambda_{1})
=a_1h_1\Lambda_{2}S(h_2)S(a_2)S^{-1}(\Lambda_{1}S(h_3))\\
&=a_1h_1\Lambda_{2}S(a_2)S^{-1}(\Lambda_{1}h_2S(h_3))\ \text{by}\ (\ref{equ005})\\
&=a_1h\Lambda_{2}S(a_2)S^{-1}(\Lambda_{1})
=a_1\Lambda_{2}S(a_2)S^{-1}(\Lambda_{1})h\ \text{by}\ (\ref{equ004})\\
&=\{a,\Lambda\}h.
\end{align*}
This completes the proof.
\qed

\section{\bf Invariants raised from commutators}
In this section, we first show that the commutators $\{a,\Lambda\}$ for $a\in Z(H)$ are invariant under twisting. We then use them to construct several gauge invariants of Hopf algebra $H$.
Given a normalized twist $J$ for the unimodular Hopf algebra $H$, one obtains the twisted Hopf algebra $H^J$. Obviously, $H^J$ and $H$ share the same algebra structure (hence $Z(H^J)=Z(H)$) and the same integral $\Lambda$. If $\lambda$ is a  right integral of $H^*$ satisfying $\lambda(\Lambda)=1$, then $\lambda^J:=\lambda\leftharpoonup S(Q_J^{-1})Q_J$ is a right integral of $(H^J)^*$ satisfying $\lambda^J(\Lambda)=1$ (see \cite[Theorem 3.4]{AEGN}).
The commutator of $a$ and $b$ in $H^J$ is denoted by $\{a,b\}^J$. 

\begin{thm}\label{t1}
Let $H$ be a unimodular Hopf algebra over a field $\mathbbm{k}$ with a left integral $\Lambda$ of $H$ and a right integral $\lambda$ of $H^*$ such that $\lambda(\Lambda)=1$. Let $J$ be a normalized twist of $H$. For any $a\in Z(H)$, the commutator $\{a,\Lambda\}$ and the value $\lambda(\{a,\Lambda\})$ are both invariant under twisting, i.e.,
\begin{enumerate}
  \item We have $\{a,\Lambda\}^J=\{a,\Lambda\}$ for $a\in Z(H)$.  
  \item We have $\lambda^J(\{a,\Lambda\}^J)=\lambda(\{a,\Lambda\})$ for $a\in Z(H)$.  
\end{enumerate}
\end{thm}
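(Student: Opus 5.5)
The plan is to work throughout with the $g$-free expression of Proposition~\ref{p1}(1),
\[
\{a,\Lambda\}=a_1\Lambda_2S(a_2)S^{-1}(\Lambda_1).
\]
Proposition~\ref{p1}(1) was proved for an arbitrary unimodular Hopf algebra, and $H^J$ is again unimodular with the same integral $\Lambda$. So it applies to $H^J$ and gives
\[
\{a,\Lambda\}^J=J^{-(1)}a_1J^{(1)}\,\big(J^{-1}\Delta(\Lambda)J\big)^{(2)}\,Q_J^{-1}S\big(J^{-(2)}a_2J^{(2)}\big)Q_J\,Q_J^{-1}S^{-1}\big((J^{-1}\Delta(\Lambda)J)^{(1)}\big)Q_J .
\]
This formulation avoids the distinguished group-like element of $H^J$, which in general differs from $g$. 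The $Q_J$ between $S(\cdot)$ and $S^{-1}(\cdot)$ cancels. What remains is to eliminate all remaining copies of $J$, $J^{-1}$ and $Q_J^{\pm1}$.

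For part (1) I will proceed in three steps.
\begin{enumerate}
\item Move the twist factors attached to $\Delta(\Lambda)$ across $\Lambda$. Relations (\ref{equ004}) and (\ref{equ005}) let any element multiplying $\Lambda_1$ on the left or right be transported to $\Lambda_2$ (with an antipode), and vice versa. In this way $J$ and $J^{-1}$ sitting on $\Lambda$ can be pushed onto the outer factors.
\item Once the $J$'s are adjacent to $a_1$ and $a_2$, collapse them with the twist identities (\ref{eqq11}) and (\ref{eqq1}). These express $J^{-(1)}\otimes S(J^{-(2)})Q_J$ and its companion through $\Delta$ applied to a leg of $J$.
\item Use the centrality of $a$, in the form $h_1a_1\otimes h_2a_2=a_1h_1\otimes a_2h_2$ as in the proof of Proposition~\ref{p1}(2), to slide the resulting $\Delta(J^{(1)})$-type terms through $\Delta(a)$. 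The cocycle condition (\ref{qq1}) and the normalization $(\varepsilon\otimes\mathrm{id})(J)=1$ should then kill them, leaving $a_1\Lambda_2S(a_2)S^{-1}(\Lambda_1)$.
\end{enumerate}
The main obstacle is this bookkeeping. The $Q_J$ produced by $S^J$ on $a_2$ must be matched exactly against what (\ref{eqq11}) produces. I would first try to simplify $\Delta^J(\Lambda)$ on its own, that is, find a normal form for $J^{-1}\Delta(\Lambda)J$ modulo (\ref{equ004})/(\ref{equ005}), before inserting the $a$-part.

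For part (2), part (1) reduces the claim to $\lambda^J(z)=\lambda(z)$ for $z=\{a,\Lambda\}$. By Proposition~\ref{p1}(2) this $z$ is central. Put $u=S(Q_J^{-1})Q_J$, so that $\lambda^J(z)=\lambda(zu)$ or $\lambda(uz)$, depending on the convention for $\leftharpoonup$. By centrality and (\ref{equ4004}) the two orders agree up to $S^2$. Then:
\begin{enumerate}
\item Write $zu=a_1\Lambda_2S(a_2)S^{-1}(\Lambda_1)u=a_1\Lambda_2S(a_2)S^{-1}(S(u)\Lambda_1)$.
\item Apply (\ref{equ004}) to replace this by $a_1u\Lambda_2S(a_2)S^{-1}(\Lambda_1)$, up to the appropriate antipode power. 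This moves $u$ next to $a_1$.
\item Use cyclicity (\ref{equ4004}) together with $S^4=g(\cdot)g^{-1}$ to bring $u$ around to $S(a_2)$.
\item Exploit the centrality of $a$ once more, so that $u$ is effectively paired with $\varepsilon$ through $\Lambda$. Here I will use that $\lambda(\Lambda)=\lambda^J(\Lambda)=1$ forces $\varepsilon$-type normalizations: applying (\ref{equ4004444}) to central elements, $\lambda$ and $\lambda^J$ agree on $\Lambda$, and hence should agree on $H\Lambda\cap Z(H)$-type expressions.
\end{enumerate}
If this direct route stalls, the fallback is to redo the computation of part (1) with $\lambda^J=\lambda\leftharpoonup u$ in place, absorbing $u$ via (\ref{equu00003}).
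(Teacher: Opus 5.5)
Your route for part (1) is the paper's: apply Proposition~\ref{p1}(1) inside $H^J$, reduce $\Delta^J(\Lambda)$ to $Q_J^{-1}\Lambda_1\otimes\Lambda_2Q_J$ via (\ref{equ004}) and (\ref{equ005}), then use (\ref{eqq11}), (\ref{eqq1}) and the centrality of $a$. However, your displayed formula uses the wrong inverse antipode. From $S^J(h)=Q_J^{-1}S(h)Q_J$ one gets
\[
(S^J)^{-1}(y)=S^{-1}(Q_JyQ_J^{-1})=S^{-1}(Q_J^{-1})S^{-1}(y)S^{-1}(Q_J).
\]
Your $Q_J^{-1}S^{-1}(y)Q_J$ inverts $S^J$ only when $Q_J^{-1}S(Q_J)$ is central, which need not hold; this is the element conjugating $S^2$ into $(S^J)^2$. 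So the $Q_J$ after $S(J^{-(2)}a_2J^{(2)})$ does not cancel. The correct expression is
\[
\{a,\Lambda\}^J=J^{-(1)}a_1J^{(1)}\Lambda_2S(J^{(2)})S(a_2)S(J^{-(2)})\,Q_JS^{-1}(Q_J^{-1})\,S^{-1}(\Lambda_1).
\]
The surviving pair is exactly what the argument needs. After moving $S(J^{(2)})$ past $\Lambda_2$ with (\ref{equ005}), (\ref{eqq11}) absorbs $S(J^{-(2)})Q_J$ and (\ref{eqq1}) absorbs $S^{-1}(Q_J^{-1})S^{-1}(J^{(2)})$. With your formula the factor instead ends up as $S^{-1}(Q_J^{-1})Q_J$ to the right of $S^{-1}(\Lambda_1)$, where neither identity applies. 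Also, the final collapse is not the cocycle condition. After centrality and one more use of (\ref{equ005}), the remaining legs assemble into $(\Delta\otimes\mathrm{id})(J)\,(\Delta\otimes\mathrm{id})(J^{-1})=1\otimes1\otimes1$.

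For part (2) there is a genuine gap. Your reduction to $\lambda(uz)=\lambda(z)$ for central $z=\{a,\Lambda\}$, with $u=S(Q_J^{-1})Q_J$, is right, and the order is immaterial since $uz=zu$. But steps 1--3 move $u$ using only (\ref{equ004}), (\ref{equ4004}), $S^4=g(\cdot)g^{-1}$ and centrality, all valid for arbitrary $u\in H$. Step 4 then invokes only $\lambda^J(\Lambda)=\lambda(\Lambda)$, i.e.\ $\varepsilon(u)=1$. Since $H\Lambda=\mathbbm{k}\Lambda$, ``agreement on $H\Lambda\cap Z(H)$'' adds nothing. No argument of this shape can work, because $\lambda(uz)=\varepsilon(u)\lambda(z)$ fails for general $u$. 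For example, in $\mathbbm{k}G$ over $\mathbb{C}$ with $\Lambda=\sum_{x\in G}x$ and $\lambda=\delta_e$, one has $\{1,\Lambda\}=|G|\cdot 1$. Hence $\lambda(h\{1,\Lambda\})=0\neq|G|=\varepsilon(h)\lambda(\{1,\Lambda\})$ for $e\neq h\in G$.

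The missing idea is to unpack $u$ in terms of the twist:
\[
u=S^2(J^{-(2)})S(J^{-(1)})S(J^{(1)})J^{(2)}.
\]
Centrality of $z$ moves $J^{(2)}$ to the right of $z$. Then (\ref{equ4004}) carries $S^2(J^{-(2)})$ from the far left to the far right as $J^{-(2)}$. This leaves $\lambda\bigl(S(J^{(1)}J^{-(1)})\,z\,J^{(2)}J^{-(2)}\bigr)=\lambda(z)$, because $JJ^{-1}=1\otimes 1$. This uses only that $z$ is central, not its commutator form, and needs neither (\ref{equ004}) nor (\ref{equu00003}).
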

\proof (1) Note that $\Delta^J(a)=J^{-1}\Delta(a)J=J^{-(1)}a_1J^{(1)}\otimes J^{-(2)}a_2J^{(2)}$. Using (\ref{equ004}) and (\ref{equ005}), we obtain  $\Delta^J(\Lambda)=J^{-(1)}\Lambda_1J^{(1)}\otimes J^{-(2)}\Lambda_2J^{(2)}=Q_J^{-1}\Lambda_1\otimes\Lambda_2Q_J$. Since $S^J(h)=Q_J^{-1}S(h)Q_J$, it follows that $(S^J)^{-1}(h)=S^{-1}(Q_JhQ_J^{-1})$ for $h\in H$. Now by Proposition \ref{p1} (1) we have 
\begin{align*}
\{a,\Lambda\}^J&=J^{-(1)}a_1J^{(1)}\Lambda_2Q_JS^J(J^{-(2)}a_2J^{(2)})(S^J)^{-1}(Q_J^{-1}\Lambda_1)\\
&=J^{-(1)}a_1J^{(1)}\Lambda_2Q_JQ_J^{-1}S(J^{-(2)}a_2J^{(2)})Q_JS^{-1}(Q_J(Q_J^{-1}\Lambda_1)Q_J^{-1})\\
&=J^{-(1)}a_1J^{(1)}\Lambda_2S(J^{(2)})S(a_2)S(J^{-(2)})Q_JS^{-1}(Q_J^{-1})S^{-1}(\Lambda_1)\\
&=J^{-(1)}a_1J^{(1)}\Lambda_2S(a_2)S(J^{-(2)})Q_JS^{-1}(Q_J^{-1})S^{-1}(J^{(2)})S^{-1}(\Lambda_1)\ \text{by}\ (\ref{equ005})\\
&=J^{(1)}_{(1)}a_1J^{-(1)}_{(1)}\Lambda_2S(a_2)S(J^{(1)}_{(2)})J^{(2)}J^{-(2)}S^{-1}(J^{-(1)}_{(2)})S^{-1}(\Lambda_1)\ \text{by}\ (\ref{eqq11})\ \text{and}\  (\ref{eqq1})\\
%&=J^{(1)}_{(1)}a_1J^{-(1)}_{(1)}\Lambda_2S(a_2)S(J^{(1)}_{(2)})J^{(2)}J^{-(2)}S^{-1}(J^{-(1)}_{(2)})S^{-1}(\Lambda_1)\ \text{by}\ (\ref{eqq1})\\
&=a_1J^{(1)}_{(1)}J^{-(1)}_{(1)}\Lambda_2S(J^{(1)}_{(2)})S(a_2)J^{(2)}J^{-(2)}S^{-1}(J^{-(1)}_{(2)})S^{-1}(\Lambda_1)\ (\text{since}\ a\in Z(H))\\
&=a_1J^{(1)}_{(1)}J^{-(1)}_{(1)}\Lambda_2S(a_2)J^{(2)}J^{-(2)}S^{-1}(J^{-(1)}_{(2)})S^{-1}(J^{(1)}_{(2)})S^{-1}(\Lambda_1)\ \text{by}\ (\ref{equ005})\\
&=a_1\Lambda_2S(a_2)S^{-1}(\Lambda_1)\\
&=\{a,\Lambda\}.
\end{align*}

(2) By Part (1), we have
\begin{align*}
\lambda^J(\{a,\Lambda\}^J)&=\lambda(S(Q_J^{-1})Q_J\{a,\Lambda\})\\
&=\lambda(S^2(J^{-(2)})S(J^{-(1)})S(J^{(1)})J^{(2)}\{a,\Lambda\})\\
&=\lambda(S^2(J^{-(2)})S(J^{(1)}J^{-(1)})\{a,\Lambda\}J^{(2)})\ (\text{since}\ \{a,\Lambda\}\in Z(H))\\
&=\lambda(S(J^{(1)}J^{-(1)})\{a,\Lambda\}J^{(2)}J^{-(2)})\ \text{by}\ (\ref{equ4004})\\
&=\lambda(\{a,\Lambda\}).
\end{align*}
This completes the proof.
\qed

In order to obtain gauge invariants derived from commutators, we begin with the following preparations. 
Let $H$ be a finite-dimensional unimodular Hopf algebra over a field $\mathbbm{k}$ with a nonzero integral $\Lambda$. Define $$ \Lambda^{\{1\}}=\Lambda\ \text{and}\ \Lambda^{\{m\}}=\{\Lambda^{\{m-1\}},\Lambda\}\ \text{for}\ m>1.$$
Then $\Lambda^{\{m\}}\in Z(H)$ for $m>0$. Similarly, in $H^J$,  we define  $\Lambda^{\{1\}^J}=\Lambda$ and $\Lambda^{\{m\}^J}=\{\Lambda^{\{m-1\}^J},\Lambda\}^J$ for $m>1$, where $\{a,b\}^J$ is the commutator of $a,b$ in $H^J$. It follows from Theorem \ref{t1}(1) that $\Lambda^{\{m\}^J}=\Lambda^{\{m\}}$ for $m>0$, i.e., the sequence $\{\Lambda^{\{m\}}\mid m>0\}$ is invariant under twisting.

\begin{thm}\label{prop112}
Let $H$ be a finite-dimensional unimodular Hopf algebra over a field $\mathbbm{k}$ with a nonzero integral $\Lambda$.
Suppose $\chi_{W}(\Lambda^{\{m\}})\neq0$ for some finite-dimensional $H$-module $W$ and some positive integer $m$.
\begin{enumerate}
\item The ratio $\chi_{V}(\Lambda^{\{m\}})/\chi_{W}(\Lambda^{\{m\}})$ is a gauge invariant of $H$ for any finite-dimensional $H$-module $V$.

\item The ratio $(\chi_{V}(\Lambda^{\{n\}}))^m/(\chi_{W}(\Lambda^{\{m\}}))^n$ is a gauge invariant of $H$ for any finite-dimensional $H$-module $V$ and any positive integer $n$.

\item The ratio $(\chi_{V}(\Lambda^{[n]}))^m/\chi_{W}(\Lambda^{\{m\}})$ is a gauge invariant of $H$ for any finite-dimensional $H$-module $V$ and any integer $n$, where $\Lambda^{[n]}$ is the $n$-th Sweedler power of $\Lambda$.
\end{enumerate}\end{thm}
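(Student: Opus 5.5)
The plan is to reduce gauge invariance to two facts: invariance under twisting, established in Theorem \ref{t1} and its consequence for iterated commutators, and invariance under rescaling the integral. Recall that two finite-dimensional Hopf algebras $H$ and $K$ are gauge equivalent exactly when $H$-mod and $K$-mod are equivalent as tensor categories. By the standard reconstruction result (Etingof--Gelaki, Ng--Schauenburg), this happens exactly when $K\cong H^J$ as Hopf algebras for some normalized twist $J$ of $H$. Under such an equivalence a module $V$ corresponds to the same vector space with the same action, now regarded as an $H^J$-module; this is because $H^J=H$ as algebras and the equivalence is the identity on objects. So for each quantity $Q(V,W)$ in the statement it will suffice to check three things:
\begin{enumerate}
\item $Q$ is unchanged when $H$ is replaced by an isomorphic Hopf algebra;
\item $Q$ is unchanged when $H$ is replaced by $H^J$, keeping $V,W$ fixed;
\item $Q$ does not depend on the choice of the nonzero integral $\Lambda$.
\end{enumerate}

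Point (1) is formal. A Hopf isomorphism $\phi:H\to K$ sends integrals to integrals and the distinguished group-like $g$ to that of $K$. It intertwines $S$, so $\phi(\{a,b\})=\{\phi(a),\phi(b)\}$ and hence $\phi(\Lambda^{\{m\}})=\phi(\Lambda)^{\{m\}}$. Characters are compatible via pulling modules back along $\phi$. Sweedler powers are likewise preserved by $\phi$.

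For point (2), note that $H^J$ has the same integral $\Lambda$. By Theorem \ref{t1}(1) applied inductively (each $\Lambda^{\{m-1\}}\in Z(H)$ by Proposition \ref{p1}(2)), we get $\Lambda^{\{m\}^J}=\Lambda^{\{m\}}$. Therefore $\chi_V(\Lambda^{\{m\}})$ and $\chi_W(\Lambda^{\{m\}})$ literally do not change, and the hypothesis $\chi_W(\Lambda^{\{m\}})\neq 0$ transfers to the twisted side. For part (3) I also need $\chi_V((\Lambda^{[n]})^J)=\chi_V(\Lambda^{[n]})$ for all integers $n$ (the twisted Sweedler power being computed with $\Delta^J$ and $S^J$). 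This is the known twist invariance of generalized indicators for unimodular Hopf algebras, due to Kashina--Montgomery--Ng and Shimizu, which I would quote rather than reprove. I regard this as the only nontrivial input beyond Theorem \ref{t1}, and the main point to get right is that the cited result applies to all integers $n$ (negative $n$ involving $S$) with the integral $\Lambda$ itself rather than a normalized variant.

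For point (3), replace $\Lambda$ by $c\Lambda$ with $c\in\mathbbm{k}^\times$. The commutator $\{a,b\}$ is bilinear, so induction gives $(c\Lambda)^{\{m\}}=c^m\Lambda^{\{m\}}$: indeed $\{c^{m-1}\Lambda^{\{m-1\}},c\Lambda\}=c^m\Lambda^{\{m\}}$. Also $(c\Lambda)^{[n]}=c\,\Lambda^{[n]}$, since the Sweedler power is linear in its argument: it is $\Lambda_1\cdots\Lambda_n$ for $n>0$, and $S$ applied to such a product for $n<0$. Hence
\begin{itemize}
\item in part (1) both numerator and denominator scale by $c^m$;
\item in part (2) the numerator scales by $c^{nm}$ and the denominator by $c^{mn}$;
\item in part (3) the numerator scales by $c^{m}$, which matches the denominator.
\end{itemize}
In every case the ratio is independent of $\Lambda$. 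Combining (1)--(3), each ratio depends only on the tensor category $H$-mod together with the objects $V,W$, which proves the theorem.
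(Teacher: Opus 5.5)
Your proposal is correct and follows essentially the paper's argument: the paper also invokes Ng--Schauenburg to get an isomorphism $\sigma:H'\to H^J$, uses \cite[Theorem 1.1]{KMN} to identify the given equivalence $\mathcal{F}$ with pullback along $\sigma$ (this is the precise form of your ``identity on objects'' remark), and then combines your three checks into the single chain $\sigma(\Lambda'^{\{m\}})=\sigma(\Lambda')^{\{m\}^J}=\sigma(\Lambda')^{\{m\}}=\mu^m\Lambda^{\{m\}}$ via Theorem~\ref{t1}(1). For part (3), the paper quotes the scaling identity $\chi_{\mathcal{F}(V)}(\Lambda'^{[n]})=\mu\,\chi_V(\Lambda^{[n]})$ from \cite{W}, which is exactly the external Sweedler-power input you planned to cite.
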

\proof
Let $H$ and $H'$ be finite-dimensional unimodular Hopf algebras over a field $\mathbbm{k}$ with nonzero integrals $\Lambda$ and $\Lambda'$ respectively. If the functor $\mathcal{F}:H\text{-mod}\rightarrow H'\text{-mod}$ is an equivalence of tensor categories, it follows from \cite[Theorem 2.2]{NS1} that there exists
a normalized twist $J$ of $H$  such that $H'$ is isomorphic to $H^J$ as bialgebras. Let $\sigma:H'\rightarrow H^J$ be such an isomorphism. Then $\sigma$ is
automatically a Hopf algebra isomorphism. Therefore, for any $m>0$, we have
$$\sigma(\Lambda'^{\{m\}})=\sigma(\Lambda')^{\{m\}^J}=\sigma(\Lambda')^{\{m\}},$$ where the second equality follows from  Theorem \ref{t1}(1). 
%Suppose $\sigma(\Lambda')=\mu\Lambda$ for a nonzero scalar $\mu\in\mathbbm{k}$. Then
%\begin{equation}\label{qu01}\sigma(P'_n(\Lambda'))=P^J_n(\sigma(\Lambda'))=\mu P^J_n(\Lambda)=\mu TP_n(\Lambda),\end{equation}where $T=J^{-(1)}\alpha(J^{-(2)})$.
The isomorphism $\sigma$ induces a $\mathbbm{k}$-linear equivalence $(-)^\sigma:H\text{-mod}\rightarrow H'\text{-mod}$ as follows:  for any finite-dimensional $H$-module $V$, $V^{\sigma}=V$ as $\mathbbm{k}$-linear space with the $H'$-module structure given by $a'v=\sigma(a')v$ for $a'\in H'$, $v\in V$, and $f^\sigma=f$ for any morphism $f$ in $H$-mod.
Thus, $$\chi_{V^{\sigma}}(a')=\chi_V(\sigma(a'))\ \text{for}\ a'\in H'.$$
Moreover, the equivalence  $\mathcal{F}$ is naturally isomorphic to the $\mathbbm{k}$-linear equivalence $(-)^\sigma$ (see \cite[Theorem 1.1]{KMN}).
Therefore, $$\chi_{\mathcal{F}(V)}(a')=\chi_{V^{\sigma}}(a')\ \text{for}\ a'\in H'.$$
Taking $a'=\Lambda'^{\{m\}}$, we have
\begin{equation}\label{qqu}\chi_{\mathcal{F}(V)}(\Lambda'^{\{m\}})=\chi_{V^{\sigma}}(\Lambda'^{\{m\}})=\chi_{V}(\sigma(\Lambda'^{\{m\}}))=\chi_{V}(\sigma(\Lambda')^{\{m\}}).\end{equation}
We set $\sigma(\Lambda')=\mu\Lambda$ for some nonzero scalar $\mu$. Then $\sigma(\Lambda')^{\{m\}}=\mu^m \Lambda^{\{m\}}$ and (\ref{qqu}) becomes
\begin{equation}\label{qqqu}\chi_{\mathcal{F}(V)}(\Lambda'^{\{m\}})=\mu^m\chi_{V}(\Lambda^{\{m\}})\end{equation}for any finite-dimensional $H$-module $V$ and $m>0$.

(1)  If  $\chi_{W}(\Lambda^{\{m\}})\neq0$ for some finite-dimensional $H$-module $W$ and some positive integer $m$, then the ratios
$$\frac{\chi_{\mathcal{F}(V)}(\Lambda'^{\{m\}})}{\chi_{\mathcal{F}(W)}(\Lambda'^{\{m\}})}
=\frac{\mu^m\chi_{V}(\Lambda^{\{m\}})}{\mu^m\chi_{W}(\Lambda^{\{m\}})}=\frac{\chi_{V}(\Lambda^{\{m\}})}{\chi_{W}(\Lambda^{\{m\}})}$$
show that  $\chi_{V}(\Lambda^{\{m\}})/\chi_{W}(\Lambda^{\{m\}})$ is a gauge invariant of $H$ for any finite-dimensional $H$-module $V$.

(2) The ratios
$$\frac{(\chi_{\mathcal{F}(V)}(\Lambda'^{\{n\}}))^m}{(\chi_{\mathcal{F}(W)}(\Lambda'^{\{m\}}))^n}=\frac{\mu^{mn}(\chi_{V}(\Lambda^{\{n\}}))^m}{\mu^{mn}(\chi_{W}(\Lambda^{\{m\}}))^n}
=\frac{(\chi_{V}(\Lambda^{\{n\}}))^m}{(\chi_{W}(\Lambda^{\{m\}}))^n}
$$ show that $(\chi_{V}(\Lambda^{\{n\}}))^m/(\chi_{W}(\Lambda^{\{m\}}))^n$ is a gauge invariant of $H$ for any finite-dimensional $H$-module $V$ and any positive integer $n$.

(3) Using  (\ref{qqqu}) and the equality $\chi_{\mathcal{F}(V)}(\Lambda'^{[n]})=\mu\chi_{V}(\Lambda^{[n]})$ from \cite{W}, we have 
$$\frac{(\chi_{\mathcal{F}(V)}(\Lambda'^{[n]}))^m}{\chi_{\mathcal{F}(W)}(\Lambda'^{\{m\}})}
=\frac{\mu^m(\chi_{V}(\Lambda^{[n]}))^m}{\mu^m\chi_{W}(\Lambda^{\{m\}})}
=\frac{(\chi_{V}(\Lambda^{[n]}))^m}{\chi_{W}(\Lambda^{\{m\}})}.$$
Hence $(\chi_{V}(\Lambda^{[n]}))^m/\chi_{W}(\Lambda^{\{m\}})$ is a gauge invariant of $H$ for any finite-dimensional $H$-module $V$.
\qed

If, moreover,  $H$ is semisimple, then the trivial $H$-module $\mathbbm{k}$ satisfies $\chi_{\mathbbm{k}}(\Lambda^{\{m\}})=\varepsilon(\Lambda)^m\neq0$ for $m>0$. Thus, $\chi_{V}(\Lambda^{\{m\}})/\chi_{\mathbbm{k}}(\Lambda^{\{m\}})=\chi_{V}((\Lambda/\varepsilon(\Lambda))^{\{m\}})$ is a gauge invariant of $H$, where $\Lambda/\varepsilon(\Lambda)$ is an idempotent integral of $H$. We summarize it as follows:  

%As a result, we have the following corollary:
\begin{cor}
Let $H$ be a finite-dimensional semisimple Hopf algebra over a field $\mathbbm{k}$ with an idempotent integral $\Lambda$. For any finite-dimensional $H$-module $V$ and any $m>0$, $\chi_{V}(\Lambda^{\{m\}})$ is a gauge invariant of $H$. In particular, if $\lambda$ is a right integral of $H^*$ normalized by $\lambda(\Lambda)=1$, then $\lambda=\chi_{\text{reg}}$. In this case, $\lambda(\Lambda^{\{m\}})$ is a gauge invariant of $H$ for $m>0$.
\end{cor}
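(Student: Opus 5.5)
The plan is to read the corollary off Theorem \ref{prop112}(1) with $W=\mathbbm{k}$ the trivial module, and then to identify $\lambda$ with $\chi_{\operatorname{reg}}$ so that the last claim becomes a special case of the first.

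\textbf{Step 1: the idempotent normalization.} Since $H$ is semisimple, Maschke's theorem gives $\varepsilon(\Lambda)\neq 0$, and the idempotent integral is $\Lambda$ with $\varepsilon(\Lambda)=1$. On the trivial module every element acts by its counit, so $\chi_{\mathbbm{k}}=\varepsilon$. Because $\varepsilon$ is an algebra map and $\varepsilon(g)=1$,
\[
\varepsilon(\{a,b\})=\varepsilon(a)\varepsilon(b).
\]
By induction this gives $\chi_{\mathbbm{k}}(\Lambda^{\{m\}})=\varepsilon(\Lambda)^m=1\neq 0$.

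\textbf{Step 2: invariance of $\chi_V(\Lambda^{\{m\}})$.} Theorem \ref{prop112}(1) then says that $\chi_V(\Lambda^{\{m\}})/\chi_{\mathbbm{k}}(\Lambda^{\{m\}})=\chi_V(\Lambda^{\{m\}})$ is a gauge invariant. It is also worth checking this directly, because the scalar $\mu$ in the proof of Theorem \ref{prop112} is then forced to be $1$. Suppose $H'$ is gauge equivalent to $H$ via $\sigma:H'\to H^J$, and $\Lambda'$ is the idempotent integral of $H'$. Then $\sigma(\Lambda')=\mu\Lambda$. Applying $\varepsilon$, which $\sigma$ preserves since it is a Hopf isomorphism (the counit of $H^J$ is $\varepsilon$), gives $1=\mu$. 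Hence (\ref{qqqu}) reads $\chi_{\mathcal{F}(V)}(\Lambda'^{\{m\}})=\chi_V(\Lambda^{\{m\}})$, which is exactly the invariance.

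\textbf{Step 3: $\lambda=\chi_{\operatorname{reg}}$.} This is the only step needing a computation, and I expect it to be the main obstacle. I would use Radford's trace formula (\ref{equ400444}) with $f=L_a$, left multiplication by $a$:
\[
\chi_{\operatorname{reg}}(a)=\lambda\bigl(S(\Lambda_2)a\Lambda_1\bigr).
\]
By (\ref{equ4004444}), $\lambda(S(\Lambda_2)a)\Lambda_1=a$. To move $a$ out of the middle, apply (\ref{equ004}) in reverse to rewrite $\Lambda_1\otimes a\Lambda_2$, and use $S^2=\operatorname{id}$ and the traciality $\lambda(xy)=\lambda(yx)$ from (\ref{equ4004}). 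This should reduce the expression to
\[
\chi_{\operatorname{reg}}(a)=\lambda(a)\,\varepsilon(\Lambda)\dim H\cdot(\text{normalizing factor}).
\]
The alternative I expect to be cleaner is a uniqueness argument. Both $\lambda$ and $\chi_{\operatorname{reg}}$ are right integrals of $H^*$: for $\chi_{\operatorname{reg}}$ this holds because $H$ is semisimple, so $\chi_{\operatorname{reg}}$ is a cocommutative integral. Right integrals of $H^*$ span a one-dimensional space, so it only remains to compare their values at $\Lambda$. Left multiplication by $\Lambda$ is the projection onto $\mathbbm{k}\Lambda$, since $\Lambda h=\varepsilon(h)\Lambda$ and $\Lambda$ is idempotent. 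A rank-one idempotent has trace $1$, so $\chi_{\operatorname{reg}}(\Lambda)=1=\lambda(\Lambda)$, and therefore $\lambda=\chi_{\operatorname{reg}}$.

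\textbf{Step 4: conclusion.} Taking $V=H$, the regular module, in Step 2 shows that $\lambda(\Lambda^{\{m\}})=\chi_{\operatorname{reg}}(\Lambda^{\{m\}})$ is a gauge invariant. Alternatively, this follows from Theorem \ref{t1}(2) together with the uniqueness of the normalized integral.
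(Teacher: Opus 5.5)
Your proof follows the paper's own argument: take $W=\mathbbm{k}$ in Theorem \ref{prop112}(1) with $\chi_{\mathbbm{k}}(\Lambda^{\{m\}})=\varepsilon(\Lambda)^m=1$, and your counit check that $\mu=1$ nicely sharpens the paper's reasoning. The paper only asserts $\lambda=\chi_{\operatorname{reg}}$, and your uniqueness argument for it is fine, with one caveat: ``$\chi_{\operatorname{reg}}$ is an integral of $H^*$'' does not follow from semisimplicity alone. Comparing $\lambda(ab)=\lambda(S^2(b)a)$ with the traciality of $\chi_{\operatorname{reg}}$ shows it is equivalent to $S^2=\operatorname{id}$, so you are really citing Larson--Radford in characteristic $0$, just as the paper tacitly does.
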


\begin{prop}
Let $H$ be a finite-dimensional unimodular Hopf algebra over a field $\mathbbm{k}$ with a nonzero integral $\Lambda$. Let $\lambda$ be a right integral of $H^*$ such that $\lambda(\Lambda)=1$.
\begin{enumerate}
\item  $S(\Lambda^{\{2\}})=\Lambda^{\{2\}}$. Hence $\chi_{V^*}(\Lambda^{\{2\}})=\chi_V(\Lambda^{\{2\}})$ for any  finite-dimensional $H$-module $V$.
\item  $\lambda(\Lambda^{\{2\}})=\chi_{\operatorname{ad}}(\Lambda)$.
\end{enumerate}
\end{prop}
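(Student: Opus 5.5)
The plan is to work throughout with the formula of Proposition \ref{p1}(1) applied with $a=\Lambda$. Writing $\Lambda'$ for a second copy of $\Lambda$, it gives
$$\Lambda^{\{2\}}=\{\Lambda,\Lambda\}=\Lambda_1\Lambda'_2S(\Lambda_2)S^{-1}(\Lambda'_1).$$
Besides this, I will use the standard facts for a unimodular $H$: $S(\Lambda)=\Lambda$, $g\Lambda=\Lambda g=\Lambda$ (since $g$ is group-like and $\Lambda$ is two-sided), Radford's formula $S^4(x)=gxg^{-1}$, and the identities (\ref{equ004}), (\ref{equ005}), (\ref{0e}).

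For (1), apply $S$ to the displayed expression. This gives
$$S(\Lambda^{\{2\}})=\Lambda'_1S^2(\Lambda_2)S(\Lambda'_2)S(\Lambda_1).$$
The goal is to bring this back to the shape $\Lambda'_1\Lambda_2S(\Lambda'_2)S^{-1}(\Lambda_1)$, which is $\{\Lambda',\Lambda\}=\Lambda^{\{2\}}$ by Proposition \ref{p1}(1). The move is to rewrite the pair $S^2(\Lambda_2)\otimes S(\Lambda_1)$ coming from the unprimed integral, and there are two ways I will try.
\begin{itemize}
\item First, apply $S^2\otimes S$ to (\ref{0e}). This turns the pair into $S^2(\Lambda_1)\otimes S^3(\Lambda_2)g^{-1}$. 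Then use $S^3=gS^{-1}(\cdot)g^{-1}$ and absorb the resulting powers of $g$ into $\Lambda$ via $\Delta(g\Lambda)=\Delta(\Lambda)$.
\item Alternatively, substitute $\Lambda=S(\Lambda)$, so that $\Delta(\Lambda)=S(\Lambda_2)\otimes S(\Lambda_1)$, and then use (\ref{equ005}) to commute the outer factor past $S(\Lambda'_2)$.
\end{itemize}
The second assertion of (1) then follows at once from $\chi_{V^*}(x)=\chi_V(S(x))$.

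For (2), apply Radford's trace formula (\ref{equ400444}) to the left adjoint map $x\mapsto \Lambda_1xS(\Lambda_2)$. This gives
$$\chi_{\operatorname{ad}}(\Lambda)=\lambda\bigl(S(\Lambda'_2)\Lambda_1\Lambda'_1S(\Lambda_2)\bigr).$$
On the other side, in $\lambda(\Lambda^{\{2\}})$ I will move the last factor $S^{-1}(\Lambda'_1)$ to the front using (\ref{equ4004}), $\lambda(ab)=\lambda(S^2(b)a)$. This yields $\lambda\bigl(S(\Lambda'_1)\Lambda_1\Lambda'_2S(\Lambda_2)\bigr)$. It remains to exchange the legs of $\Lambda'$ by (\ref{0e}), namely $\Lambda'_2\otimes\Lambda'_1=\Lambda'_1\otimes S^2(\Lambda'_2)g$. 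This introduces an $S^3$ and a $g$, which I plan to clear with $S^4(x)=gxg^{-1}$ and another application of (\ref{equ4004}).

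The main obstacle I expect is the bookkeeping of the $S^{\pm2}$ factors and the stray powers of $g$ in both parts. Each such factor must be removed using $g\Lambda=\Lambda$ inside a coproduct, or moved from one tensor leg to the other by (\ref{equ004}) and (\ref{equ005}), and this can only be done once the element sits next to a leg of $\Lambda$. If the first route in (1) leaves an unabsorbable $g$, I will switch to the route through $S(\Lambda)=\Lambda$.
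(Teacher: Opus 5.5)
\textbf{Part (2) has a genuine gap.} Your reduction $\lambda(\Lambda^{\{2\}})=\lambda\bigl(S(\Lambda'_1)\Lambda_1\Lambda'_2S(\Lambda_2)\bigr)$ is correct, but the planned finish fails. Swapping the legs of $\Lambda'$ by (\ref{0e}) and then using $S^4(x)=gxg^{-1}$ gives $\lambda\bigl(S^{-1}(\Lambda'_2)g^{-1}\Lambda_1\Lambda'_1S(\Lambda_2)\bigr)$. After tidying (absorb $g^{-1}$ into $\Lambda'$, then adjust powers of $S$ with $S^2(\Lambda)=\Lambda$ and (\ref{equ005})) this becomes
\[
\lambda\bigl(S(\Lambda'_2)\Lambda_1g^{-1}\Lambda'_1S(\Lambda_2)\bigr)=\operatorname{Tr}\bigl(\operatorname{ad}_l(\Lambda)\circ L_{g^{-1}}\bigr),
\]
where $L_{g^{-1}}$ is left multiplication by $g^{-1}$.

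The surviving $g^{-1}$ is not bookkeeping. Since $S^2(g)=g$, (\ref{equ4004}) only cycles it: $\lambda(xg^{-1})=\lambda(g^{-1}x)$. Likewise, $g\Lambda=\Lambda$ only moves it to another leg. Removing it is exactly the content of (2), and nothing in your plan does that.

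One way to close your route is to apply (\ref{0e}) once more, now to the copy acting adjointly, in the form $\Lambda_1g^{-1}\otimes\Lambda_2=S^2(\Lambda_2)\otimes\Lambda_1$. This gives $\Lambda_1g^{-1}yS(\Lambda_2)=S\bigl(\Lambda_1S^{-1}(y)S(\Lambda_2)\bigr)$, i.e.\ $\operatorname{ad}_l(\Lambda)\circ L_{g^{-1}}=S\circ\operatorname{ad}_l(\Lambda)\circ S^{-1}$. Conjugation does not change the trace, so you get $\chi_{\operatorname{ad}}(\Lambda)$.

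The paper never meets $g$. After one cyclic move by (\ref{equ4004}), it reads $\lambda(\Lambda^{\{2\}})$ as a Radford trace (\ref{equ400444}) in which your unprimed $\Lambda$ (the first slot of the commutator) is the integral, while the other copy acts by $\operatorname{ad}_r$. This gives $\operatorname{Tr}(\operatorname{ad}_r(\Lambda)\circ S^{-2})$. It then uses $S\circ\operatorname{ad}_r(\Lambda)\circ S^{-1}=\operatorname{ad}_l(S(\Lambda))=\operatorname{ad}_l(\Lambda)$ and cyclicity of the trace. Either way, the missing ingredient is a step using invariance of the trace under conjugation; further $\lambda$-manipulations alone will not do it.

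\textbf{Part (1) is fine in substance.} The idea you need is in your second route, used twice. Since $S(\Lambda)=\Lambda$, we have $(S^2\otimes S^2)\Delta(\Lambda)=\Delta(\Lambda)$, hence $S^2(\Lambda_2)\otimes S(\Lambda_1)=\Lambda_2\otimes S^{-1}(\Lambda_1)$. This turns $S(\Lambda^{\{2\}})$ into $\{\Lambda',\Lambda\}$ at once; it is the paper's argument, and (\ref{equ005}) is not needed.

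In your first route there is a slip: $S\bigl(S^2(\Lambda_2)g\bigr)=g^{-1}S^3(\Lambda_2)=S^{-1}(\Lambda_2)g^{-1}$, not $S^3(\Lambda_2)g^{-1}$. With this correction you land exactly on the defining expression $\Lambda'_1S^2(\Lambda_1)S(\Lambda'_2)S^{-1}(\Lambda_2)g^{-1}=\{\Lambda',\Lambda\}$. So that final $g^{-1}$ must be kept, not absorbed.
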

\proof
Note that $\lambda(\Lambda)=1=\lambda(S(\Lambda))$ (see \cite[Proposition 1(e)]{Rad}). It follows that $S(\Lambda)=\Lambda$. Set $\widetilde{\Lambda}=\Lambda$.

(1) By Proposition \ref{p1}(1), we have
\begin{align*}
S(\Lambda^{\{2\}})&=S(\Lambda_1\widetilde{\Lambda}_2S(\Lambda_2)S^{-1}(\widetilde{\Lambda}_1))=\widetilde{\Lambda}_1S^2(\Lambda_2)S(\widetilde{\Lambda}_2)S(\Lambda_1)\\
&=\widetilde{\Lambda}_1S^2(\Lambda_2)S(\widetilde{\Lambda}_2)S^{-1}(S^{2}(\Lambda_1))=\widetilde{\Lambda}_1\Lambda_2S(\widetilde{\Lambda}_2)S^{-1}(\Lambda_1)=\Lambda^{\{2\}}.
\end{align*}
Now $\chi_{V^*}(\Lambda^{\{2\}})=\chi_{V}(S(\Lambda^{\{2\}}))=\chi_{V}(\Lambda^{\{2\}})$, as desired. 

(2) Recall that the left and right adjoint representations of $H$ are the $\mathbbm{k}$-linear maps $\operatorname{ad}_l,\ \operatorname{ad}_r:H\rightarrow \operatorname{End}_{\mathbbm{k}}(H)$ given respectively by
$$\operatorname{ad}_l(a):H\rightarrow H,\ \operatorname{ad}_l(a)(b)=a_1bS(a_2),$$ $$\operatorname{ad}_r(a):H\rightarrow H,\ \operatorname{ad}_r(a)(b)=S(a_1)ba_2.$$ 
Note that $S\circ \operatorname{ad}_r(a)\circ S^{-1}=\operatorname{ad}_l(S(a))$ for any $a\in H$. Now
\begin{align*}
\lambda(\Lambda^{\{2\}})&=\lambda(\widetilde{\Lambda}_1\Lambda_2S(\widetilde{\Lambda}_2)S^{-1}(\Lambda_1))
=\lambda(S(\widetilde{\Lambda}_2)S^{-1}(\Lambda_1)S^{-2}(\widetilde{\Lambda}_1)S^{-2}(\Lambda_2))\ \text{by}\ (\ref{equ4004})\\
&=\lambda(S(\widetilde{\Lambda}_2)S(\Lambda_1)S^{-2}(\widetilde{\Lambda}_1)\Lambda_2)
=\text{Tr}(\operatorname{ad}_r(\Lambda)\circ S^{-2})\ \text{by}\ (\ref{equ400444})\\
&=\text{Tr}(S^{-1}\circ(S\circ \operatorname{ad}_r(\Lambda)\circ S^{-1})\circ S^{-1})=\text{Tr}(S^{-1}\circ \operatorname{ad}_l(S(\Lambda))\circ S^{-1})\\
&=\text{Tr}(\operatorname{ad}_l(\Lambda)\circ S^{-2})=\lambda(S(\widetilde{\Lambda}_2)\Lambda_1S^{-2}(\widetilde{\Lambda}_1)S(\Lambda_2))\ \text{by}\ (\ref{equ400444})\\
&=\lambda(\Lambda_1S^{-2}(\widetilde{\Lambda}_1)S(\Lambda_2)S^{-1}(\widetilde{\Lambda}_2))\ \text{by}\ (\ref{equ4004})\\
&=\lambda(\Lambda_1\widetilde{\Lambda}_1S(\Lambda_2)S(\widetilde{\Lambda}_2))=\lambda(S(\widetilde{\Lambda}_2)\Lambda_1\widetilde{\Lambda}_1S(\Lambda_2))\ \text{by}\ (\ref{equ004})\\
&=\text{Tr}(\operatorname{ad}_l(\Lambda))\ \text{by}\ (\ref{equ400444})\\
&=\chi_{\operatorname{ad}}(\Lambda).
\end{align*}
This completes the proof.
\qed

\begin{rem}
If $H$ is not semisimple, then $\Lambda^2=0$ implies that $\operatorname{ad}_l(\Lambda):H\rightarrow H$ is nilpotent. In this case, $\lambda(\Lambda^{\{2\}})=\chi_{\operatorname{ad}}(\Lambda)=0$. If $H$ is semisimple and we choose $\Lambda$ to be idempotent, then $\lambda(\Lambda)=1$ implies that $\lambda=\chi_{\operatorname{reg}}$. In this case,  $\chi_{\operatorname{reg}}(\Lambda^{\{2\}})=\chi_{\operatorname{ad}}(\Lambda^{\{1\}}).$ We shall prove later that $\chi_{\operatorname{reg}}(\Lambda^{\{m\}})=\chi_{\operatorname{ad}}(\Lambda^{\{m-1\}})$ for all $m>1$.
\end{rem}

For any finite-dimensional Hopf algebra $H$, the characters of the (left) regular
representation and the (left) adjoint representation of $H$ are both closely related to a right integral of $H^*$. These relationships are investigated in the following proposition:  
\begin{prop}\label{t5}
Let $H$ be a finite-dimensional (not necessary unimodular) Hopf algebra over a field $\mathbbm{k}$ with a left integral $\Lambda\in H$ and a right integral $\lambda\in H^*$ such that $\lambda(\Lambda)=1$. Let $g$ be the distinguished group-like
element of $H$ and $\alpha$ the distinguished group-like
element of $H^*$. We have
\begin{enumerate}
\item $\Lambda_1\alpha^{-1}(S(\Lambda_2))g^{-1}S^3(\Lambda_3)g\rightharpoonup\lambda=\chi_{\operatorname{reg}}=\lambda\leftharpoonup\alpha(\Lambda_1)S^2(\Lambda_2)S(\Lambda_3)$.
\item  $\lambda(S(\widetilde{\Lambda}_2)S(\Lambda_3)\widetilde{\Lambda}_1S^2(\Lambda_2))\Lambda_1\rightharpoonup \lambda=\chi_{\operatorname{ad}}=\lambda\leftharpoonup \lambda(S(\widetilde{\Lambda}_2)\Lambda_1\widetilde{\Lambda}_1S(\Lambda_2))S(\Lambda_3)$, where $\widetilde{\Lambda}=\Lambda$.
\end{enumerate}
\end{prop}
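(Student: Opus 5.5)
The plan is to reduce both parts to Radford's trace formula (\ref{equ400444}), which holds for any finite-dimensional Hopf algebra, combined with the "dual basis" expansions of (\ref{equ4004444}). Recall the conventions $(h\rightharpoonup\lambda)(a)=\lambda(ah)$ and $(\lambda\leftharpoonup h)(a)=\lambda(ha)$. The first step is to check which form of (\ref{equ4004444}) survives without unimodularity. I expect $a=\lambda(a\Lambda_1)S(\Lambda_2)$ to hold for a left integral $\Lambda$ and a right integral $\lambda$ in general, since this is Radford's Eq.~(1). For the other expansion $a=\lambda(S(\Lambda_2)a)\Lambda_1$ I would re-derive it directly from $\lambda(\Lambda)=1$ and the right integral property of $\lambda$. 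If only a twisted version holds, I will correct it by $\alpha$ or $g$.

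\textbf{Part (2).} By linearity of $\chi_{\operatorname{ad}}$ and the first expansion,
$$\chi_{\operatorname{ad}}(a)=\lambda(a\Lambda_1)\,\chi_{\operatorname{ad}}(S(\Lambda_2)).$$
By (\ref{equ400444}) with $\widetilde{\Lambda}=\Lambda$,
$$\chi_{\operatorname{ad}}(S(\Lambda_2))=\operatorname{Tr}(\operatorname{ad}_l(S(\Lambda_2)))=\lambda\big(S(\widetilde{\Lambda}_2)S(\Lambda_3)\widetilde{\Lambda}_1S^2(\Lambda_2)\big).$$
Substituting, $\chi_{\operatorname{ad}}(a)=\lambda(a\Lambda_1)\,\lambda\big(S(\widetilde{\Lambda}_2)S(\Lambda_3)\widetilde{\Lambda}_1S^2(\Lambda_2)\big)$, which is exactly the left-hand expression $\big(\lambda(\cdots)\Lambda_1\big)\rightharpoonup\lambda$ evaluated at $a$. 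For the right-hand expression I use the other expansion, $\chi_{\operatorname{ad}}(a)=\lambda(S(\Lambda_3)a)\,\chi_{\operatorname{ad}}(\Lambda_1)$ after coassociative relabeling, and again evaluate $\chi_{\operatorname{ad}}(\Lambda_1)=\lambda\big(S(\widetilde{\Lambda}_2)\Lambda_1\widetilde{\Lambda}_1S(\Lambda_2)\big)$ by (\ref{equ400444}).

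\textbf{Part (1).} Here I compute directly. The trace formula gives
$$\chi_{\operatorname{reg}}(a)=\operatorname{Tr}(b\mapsto ab)=\lambda\big(S(\Lambda_2)a\Lambda_1\big).$$
To obtain the form $\lambda\leftharpoonup h$, I move $\Lambda_1$ to the front using the Nakayama automorphism of $\lambda$, which for general $H$ should read $\lambda(xy)=\lambda\big(\alpha(y_1)S^2(y_2)\,x\big)$, the non-unimodular form of (\ref{equ4004}). This gives
$$\chi_{\operatorname{reg}}(a)=\lambda\big(\alpha(\Lambda_1)S^2(\Lambda_2)S(\Lambda_3)a\big),$$
which is the claimed right-hand side. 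For the left-hand side I instead move $S(\Lambda_2)$ to the back with the inverse Nakayama automorphism. I then rewrite $S^{-2}$ twisted by $\alpha^{-1}$ using Radford's $S^4$-formula $S^4(h)=g(\alpha\rightharpoonup h\leftharpoonup\alpha^{-1})g^{-1}$, to put the result in the form $\lambda\big(a\,\Lambda_1\alpha^{-1}(S(\Lambda_2))g^{-1}S^3(\Lambda_3)g\big)$.

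The main obstacle is bookkeeping of $\alpha$ and $g$ in the non-unimodular case. This means pinning down the exact Nakayama automorphism of $\lambda$ (whether $\alpha$ hits on the left or right, and whether $\alpha$ or $\alpha^{-1}$ appears), and checking which dual-basis expansion needs a correction. I would first verify these on a small example such as the Taft algebra, and then match indices in the left form of (1) using the $S^4$-formula.
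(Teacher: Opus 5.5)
Your proposal is correct and is essentially the paper's argument. For the right-hand form in (1), the paper also combines Radford's trace formula with the Nakayama relation $\lambda(xy)=\lambda(\alpha(y_1)S^2(y_2)x)$. For the left-hand form it cites the inverse relation (Radford's Theorem~3(b)); you instead derive $\alpha^{-1}(z_1)S^{-2}(z_2)$ and convert with the $S^4$-formula, which does give $\Lambda_1\alpha^{-1}(S(\Lambda_2))g^{-1}S^3(\Lambda_3)g=\Lambda_1S^{-1}(\Lambda_2)\alpha(\Lambda_3)$. For (2), the paper uses the expansion (\ref{equ4004444}) applied to the unknown $h$ rather than to the argument $a$. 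Both expansions in (\ref{equ4004444}) hold for any left integral $\Lambda$ and right integral $\lambda$ with $\lambda(\Lambda)=1$, so no $\alpha$- or $g$-correction is needed there.
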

\proof
(1) Suppose $\chi_{\operatorname{reg}}=\lambda\leftharpoonup h$ for some $h\in H$. For any $a\in H$, we have
\begin{equation}\label{t4}\lambda(ha)=\chi_{\operatorname{reg}}(a)=\text{Tr}(L_a)=\lambda(S(\Lambda_2)a\Lambda_1)=\lambda(\alpha(\Lambda_1)S^2(\Lambda_2)S(\Lambda_3)a),\end{equation}
where the latter follows from \cite[Theorem 3(a)]{Rad}. By the non-degeneracy of $\lambda$, we have $h=\alpha(\Lambda_1)S^2(\Lambda_2)S(\Lambda_3).$
Similarly, using \cite[Theorem 3(b)]{Rad} one can prove $\Lambda_1\alpha^{-1}(S(\Lambda_2))g^{-1}S^3(\Lambda_3)g\rightharpoonup\lambda=\chi_{\operatorname{reg}}$.

(2) Suppose $\chi_{\operatorname{ad}}=\lambda\leftharpoonup h$ for some $h\in H$. For any $a\in H$, we have
\begin{equation}\label{t3}\lambda(ha)=\chi_{\operatorname{ad}}(a)=\text{Tr}(\operatorname{ad}_l(a))=\lambda(S(\widetilde{\Lambda}_2)a_1\widetilde{\Lambda}_1S(a_2)).\end{equation}
%where the latter follows from (\ref{equ400444}). 
Note that  $h=\lambda(h\Lambda_1)S(\Lambda_2)$ by (\ref{equ4004444}). Replacing $a$ with $\Lambda_1$ in (\ref{t3}), we have
$$h=\lambda(h\Lambda_1)S(\Lambda_2)=\lambda(S(\widetilde{\Lambda}_2)\Lambda_1\widetilde{\Lambda}_1S(\Lambda_2))S(\Lambda_3).$$
The proof of the equality $\lambda(S(\widetilde{\Lambda}_2)S(\Lambda_3)\widetilde{\Lambda}_1S^2(\Lambda_2))\Lambda_1\rightharpoonup \lambda=\chi_{\operatorname{ad}}$ is similar.
\qed

\begin{rem}\label{rem1}
If $H$ is unimodular, then $\alpha=\varepsilon$ and $S(\Lambda)=\Lambda.$ The result of Proposition \ref{t5}(1) can be reduced to be  $S(\Lambda_2)\Lambda_1\rightharpoonup\lambda=\chi_{\operatorname{reg}}=\lambda\leftharpoonup S(\Lambda_2)\Lambda_1$. Later we will use this formula to compute the values $\chi_{\operatorname{reg}}(\Lambda^{\{2\}})$ for some restricted quantum groups.
\end{rem}

\section{\bf Semisimple case}

Let $H$ be a semisimple Hopf algebra over an algebraically closed field $\mathbbm{k}$ of characteristic 0 with an idempotent integral $\Lambda$. Let $\{V_0,\cdots,V_{n-1}\}$ be a complete set of non-isomorphic simple $H$-modules. Let $\{e_0,\cdots,e_{n-1}\}$ and $\{\chi_0,\cdots,\chi_{n-1}\}$ be the associated central primitive idempotents and irreducible characters of $H$ respectively. Denote $d_i=\dim_{\mathbbm{k}}(V_i)$ for $i=0,\cdots,n-1$. The antipode of $H^*$ is denoted by $s$. In this case,
we will give the expression of $\Lambda^{\{m\}}$ which is slightly different from that of \cite[Proposition 2.2]{CW}. Then we use the expression of $\Lambda^{\{m\}}$ to describe the gauge invariant $\chi_V(\Lambda^{\{m\}})$ for any simple $H$-module $V$ and any $m>0$. In particular, we point out that $\chi_V(\Lambda^{\{m\}})$ can be expressed via the fusion rules of the fusion category $H$-mod.

\begin{thm}\label{t2}
Let $H$ be a semisimple Hopf algebra over an algebraically closed field $\mathbbm{k}$ of characteristic 0 with an idempotent integral $\Lambda$ of $H$. 
\begin{enumerate}
\item For $a\in Z(H)$, we have $\{a,\Lambda\}=\sum_{i=0}^{n-1}\frac{1}{d_i^2}\langle\chi_is(\chi_i),a\rangle e_i.$
\item For $m>1$, we have $\Lambda^{\{m\}}=\sum_{i=0}^{n-1}\frac{1}{d_i^2}\langle\chi_is(\chi_i),\Lambda^{\{m-1\}}\rangle e_i.$
\item For $m>0$, the commutators $\Lambda^{\{m\}}$ can be described as follows:

$$\Lambda^{\{m\}}=\left\{
    \begin{array}{ll}
      e_0, & \hbox{$m=1$;} \\
      \sum_{i=0}^{n-1}\frac{1}{d_i^2}e_i, & \hbox{$m=2$;} \\
      \sum_{i_1,\cdots,i_{m-1}=0}^{n-1}\frac{1}{d^2_{i_1}\cdots d^2_{i_{m-1}}}\\ \times\langle\chi_{i_2}s(\chi_{i_2}),e_{i_1}\rangle\cdots \langle\chi_{i_{m-1}}s(\chi_{i_{m-1}}),e_{i_{m-2}}\rangle e_{i_{m-1}}, & \hbox{$m>2$.}
    \end{array}
  \right.
$$
\end{enumerate}
\end{thm}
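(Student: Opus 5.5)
The plan is to prove part (1) first; parts (2) and (3) then follow by iteration. Since $H$ is semisimple and $\operatorname{char}\mathbbm{k}=0$, we have $S^2=\operatorname{id}$ and $g=1$. Proposition~\ref{p1}(1) gives
\[
\{a,\Lambda\}=a_1\Lambda_2S(a_2)S(\Lambda_1).
\]
By Proposition~\ref{p1}(2), $\{a,\Lambda\}$ is central, so we can write $\{a,\Lambda\}=\sum_i c_ie_i$. Applying $\chi_i$ recovers the coefficient: $c_i=\chi_i(\{a,\Lambda\})/d_i$.

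So the main task is to compute $\chi_i(a_1\Lambda_2S(a_2)S(\Lambda_1))$. The standard tool is the following identity for the idempotent integral: for a simple module $V_i$,
\[
\rho_i(\Lambda_2)\,X\,\rho_i(S(\Lambda_1))=\frac{\operatorname{Tr}(X)}{d_i}\,\mathrm{id}_{V_i}
\qquad\text{for all } X\in\operatorname{End}(V_i).
\]
One way to see this is that the map $X\mapsto \Lambda_2 X S(\Lambda_1)$, which by $S^2=\operatorname{id}$ is also $X\mapsto S(\Lambda_1')XS^2(\Lambda_2')$, is the projection of $\operatorname{End}(V_i)\cong V_i\otimes V_i^*$ onto $H$-invariants, i.e.\ onto scalars. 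Its trace is preserved, which forces the factor $\operatorname{Tr}(X)/d_i$. Equivalently, it is the projection onto $\operatorname{Hom}_H(V_i,V_i)$ under the action $h\cdot X=h_1XS(h_2)$. Some care is needed to match $\Lambda_2(\cdot)S(\Lambda_1)$ with the invariant-projection $\Lambda_1(\cdot)S(\Lambda_2)$; this works because $\Lambda$ is cocommutative in the semisimple case, as $\Delta(\Lambda)$ is invariant under the flip when $S^2=\operatorname{id}$ (this is (\ref{0e}) with $g=1$).

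Taking $X=\rho_i(a_2)$ sandwiched appropriately, we get
\[
\chi_i(a_1\Lambda_2S(a_2)S(\Lambda_1))=\operatorname{Tr}\bigl(\rho_i(a_1)\rho_i(\Lambda_2)\rho_i(S(a_2))\rho_i(S(\Lambda_1))\bigr).
\]
Here I would rewrite using cyclicity and first apply the averaging identity to the inner $X=\rho_i(S(a_2))$. This yields $\operatorname{Tr}(\rho_i(S(a_2)))/d_i=\chi_i(S(a_2))/d_i$ times $\chi_i(a_1)$. Hence
\[
\chi_i(\{a,\Lambda\})=\frac{1}{d_i}\chi_i(a_1)\,s(\chi_i)(a_2)=\frac{1}{d_i}\langle \chi_is(\chi_i),a\rangle,
\]
and therefore $c_i=\frac{1}{d_i^2}\langle\chi_is(\chi_i),a\rangle$. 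This proves (1); centrality of $a$ is used only to guarantee that the result is central, so the $e_i$-expansion is valid.

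Part (2) is (1) with $a=\Lambda^{\{m-1\}}$, which is central. For (3), the case $m=1$ uses $\Lambda=e_0$, the idempotent for the trivial module. For $m=2$, evaluate $\langle\chi_is(\chi_i),e_0\rangle=\varepsilon$-component, which equals $\chi_i(1)\,s(\chi_i)(1)$ restricted through $\Delta(e_0)$. Equivalently, $e_0=\Lambda$ and $\chi_i(\Lambda_1)\chi_i(S\Lambda_2)=\chi_{V_i\otimes V_i^*}(\Lambda)=\dim\operatorname{Hom}_H(\mathbbm{k},V_i\otimes V_i^*)=1$. This gives coefficient $1/d_i^2$. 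For $m>2$, induct, substituting the $m-1$ expression and using linearity of $\langle\chi_{i}s(\chi_{i}),-\rangle$.

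The main obstacle is the averaging identity, specifically getting the order $\Lambda_2\cdots S(\Lambda_1)$ right. I expect to resolve it via cocommutativity of $\Delta(\Lambda)$ and $S^2=\operatorname{id}$.
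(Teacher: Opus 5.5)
Your proposal is correct and follows essentially the paper's route: the paper also computes $\chi_i(\{a,\Lambda\})$ by noting that the $\Lambda$-averaged factor $\Lambda_1S(a_2)S(\Lambda_2)$ is central, so it acts on $V_i$ by the scalar $\chi_i(S(a_2))/d_i$. There this is phrased as $\chi_i\leftharpoonup z=\frac{1}{d_i}\chi_i(z)\chi_i$ together with $S(\Lambda_2)\Lambda_1=1$, which is exactly your averaging identity on $\operatorname{End}(V_i)$, and the paper then iterates for (2) and (3). Your justification that $\langle\chi_is(\chi_i),\Lambda\rangle=\dim\operatorname{End}_H(V_i)=1$ in the case $m=2$ fills in a step the paper only asserts; drop the garbled first phrasing there and keep the ``equivalently'' argument.
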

\proof
(1) Note that $\chi_i\leftharpoonup z=\frac{1}{d_i}\langle\chi_i,z\rangle\chi_i$ for $z\in Z(H)$ (see \cite[Eq.(13)]{CW}). For $a\in Z(H)$, we have $\{a,\Lambda\}\in Z(H)$. Then
\begin{align*}
\langle\chi_i,\{a,\Lambda\}\rangle&=\langle\chi_i,a_1\Lambda_1S(a_2)S(\Lambda_2)\rangle\\
&=\langle\chi_i\leftharpoonup \Lambda_1S(a_2)S(\Lambda_2),a_1\rangle\ \ \ \ \ (\text{since}\  \Lambda_1S(a_2)S(\Lambda_2)\in Z(H))\\
&=\frac{1}{d_i}\langle\chi_i,\Lambda_1S(a_2)S(\Lambda_2)\rangle\langle\chi_i,a_1\rangle
=\frac{1}{d_i}\langle\chi_i,S(\Lambda_2)\Lambda_1S(a_2)\rangle\langle\chi_i,a_1\rangle\\
&=\frac{1}{d_i}\langle\chi_i,S(a_2)\rangle\langle\chi_i,a_1\rangle
=\frac{1}{d_i}\langle\chi_is(\chi_i),a\rangle.
\end{align*}
It follows that $\{a,\Lambda\}=\sum_{i=0}^{n-1}\frac{1}{d^2_i}\langle\chi_is(\chi_i),a\rangle e_i.$

(2) Since $\Lambda^{\{m-1\}}\in Z(H)$ for $m>1$, by Part (1) we have $$\Lambda^{\{m\}}=\{\Lambda^{\{m-1\}},\Lambda\}=\sum_{i=0}^{n-1}\frac{1}{d_i^2}\langle\chi_is(\chi_i),\Lambda^{\{m-1\}}\rangle e_i,$$  as desired. 

(3) Obviously, $\Lambda^{\{1\}}=\Lambda=e_0$. By Part (2), we have  $$\Lambda^{\{2\}}=\sum_{i=0}^{n-1}\frac{1}{d_i^2}\langle\chi_is(\chi_i),\Lambda\rangle e_i=\sum_{i=0}^{n-1}\frac{1}{d_i^2}e_i,$$ see also \cite[Theorem 2.8]{Cohen} for the same formula. We proceed by induction on $m$ for $m>2$. For the case $m=3$, since $\Lambda^{\{2\}}=\sum_{i_1=0}^{n-1}\frac{1}{d_{i_1}^2}e_{i_1}$, it follows that
$$
\Lambda^{\{3\}}=\sum_{i_2=0}^{n-1}\frac{1}{d_{i_2}^2}\langle\chi_{i_2}s(\chi_{i_2}),\Lambda^{\{2\}}\rangle e_{i_2}=\sum_{i_1,i_2=0}^{n-1}\frac{1}{d_{i_1}^2d_{i_2}^2}\langle\chi_{i_2}s(\chi_{i_2}),e_{i_1}\rangle e_{i_2}.
$$ Assume that for the case $m=t$, we have $$\Lambda^{\{t\}}=\sum_{i_1,\cdots,i_{t-1}=0}^{n-1}\frac{1}{d^2_{i_1}\cdots d^2_{i_{t-1}}}\langle\chi_{i_2}s(\chi_{i_2}),e_{i_1}\rangle\cdots \langle\chi_{i_{t-1}}s(\chi_{i_{t-1}}),e_{i_{t-2}}\rangle e_{i_{t-1}}.$$ For the case $m=t+1$, we have
\begin{align*}
\Lambda^{\{t+1\}}&=\sum_{i_t=0}^{n-1}\frac{1}{d_{i_t}^2}\langle\chi_{i_t}s(\chi_{i_t}),\Lambda^{\{t\}}\rangle e_{i_{t}}\\
&=\sum_{i_1,\cdots,i_{t}=0}^{n-1}\frac{1}{d^2_{i_1}\cdots d^2_{i_{t}}}\langle\chi_{i_2}s(\chi_{i_2}),e_{i_1}\rangle\cdots \langle\chi_{i_{t}}s(\chi_{i_{t}}),e_{i_{t-1}}\rangle e_{i_{t}}
\end{align*}
This completes the proof.
\qed

%We proceed by induction on $m$. If $m=2$, then the equality $\langle\chi_is(\chi_i),\Lambda_{\{1\}}\rangle=\langle\chi_is(\chi_i),e_0\rangle=1$ shows that $\sum_{i=0}^{n-1}\frac{1}{d_i^2}\langle\chi_is(\chi_i),\Lambda_{\{1\}}\rangle e_i=\sum_{i=0}^{n-1}\frac{1}{d_i^2}e_i=\Lambda_{\{2\}}$. Assume that for the case $m=t$, we have $\Lambda_{\{t\}}=\sum_{i=0}^{n-1}\frac{1}{d_i^2}\langle\chi_is(\chi_i),\Lambda_{\{t-1\}}\rangle e_i.$ For the case $m=t+1$, using $\{e_i,\Lambda\}=\sum_{j=0}^{n-1}\frac{1}{d^2_j}\langle\chi_js(\chi_j),e_i\rangle e_j$ we have
%\begin{align*}
%\Lambda_{\{t+1\}}&=\{\Lambda_{\{t\}},\Lambda\}=\sum_{i=0}^{n-1}\frac{1}{d_i^2}\langle\chi_is(\chi_i),\Lambda_{\{t-1\}}\rangle\{e_i,\Lambda\}\\
%&=\sum_{i=0}^{n-1}\frac{1}{d_i^2}\langle\chi_is(\chi_i),\Lambda_{\{t-1\}}\rangle \sum_{j=0}^{n-1}\frac{1}{d^2_j}\langle\chi_js(\chi_j),e_i\rangle e_j\\
%&=\sum_{j=0}^{n-1}\frac{1}{d^2_j}\langle\chi_js(\chi_j),\Lambda_{\{t\}}\rangle e_j.
%\end{align*}
%Part (1) is proved by induction.

\begin{cor}\label{c1}
Let $H$ be a semisimple Hopf algebra over an algebraically closed field $\mathbbm{k}$ of characteristic 0 with an idempotent integral $\Lambda$ of $H$. 
\begin{enumerate}
\item For $m>1$, we have $\chi_i(\Lambda^{\{m\}})=\frac{1}{d_i}\langle\chi_is(\chi_i),\Lambda^{\{m-1\}}\rangle.$ Moreover, $\chi_{\operatorname{reg}}(\Lambda^{\{m\}})=\chi_{\operatorname{ad}}(\Lambda^{\{m-1\}})$.
\item For $m>0$, $\chi_i(\Lambda^{\{m\}})$ can be described as follows:
$$\chi_i(\Lambda^{\{m\}})=\left\{
    \begin{array}{ll}
      \delta_{i0}, & \hbox{$m=1$;} \\
      \frac{1}{d_i}, & \hbox{$m=2$;} \\
       \frac{1}{d_i}\sum_{i_1,\cdots,i_{m-2}=0}^{n-1}\frac{1}{d^2_{i_1}\cdots d^2_{i_{m-2}}}\\ \times\langle\chi_{i_2}s(\chi_{i_2}),e_{i_1}\rangle\cdots \langle\chi_{i_{m-2}}s(\chi_{i_{m-2}}),e_{i_{m-3}}\rangle\langle\chi_{i}s(\chi_{i}),e_{i_{m-2}}\rangle , & \hbox{$m>2$.}
    \end{array}
  \right.
$$
\end{enumerate}
\end{cor}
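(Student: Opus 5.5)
Part (1) follows by applying $\chi_i$ to the expansion of Theorem \ref{t2}(2). Since $\chi_i(e_j)=\delta_{ij}d_i$, we get
\[
\chi_i(\Lambda^{\{m\}})=\sum_j \frac{1}{d_j^2}\langle\chi_js(\chi_j),\Lambda^{\{m-1\}}\rangle\,\chi_i(e_j)=\frac{1}{d_i}\langle\chi_is(\chi_i),\Lambda^{\{m-1\}}\rangle .
\]

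For the second assertion, write $\chi_{\operatorname{reg}}=\sum_i d_i\chi_i$. Then
\[
\chi_{\operatorname{reg}}(\Lambda^{\{m\}})=\sum_i \langle\chi_is(\chi_i),\Lambda^{\{m-1\}}\rangle .
\]
It therefore suffices to show that $\chi_{\operatorname{ad}}=\sum_i\chi_is(\chi_i)$ as functionals, or at least on $Z(H)$, since $\Lambda^{\{m-1\}}$ is central. As left $H$-modules, $H_{\operatorname{ad}}\cong\bigoplus_i\operatorname{End}_{\mathbbm{k}}(V_i)$ via the Wedderburn decomposition $H\cong\bigoplus_i\operatorname{End}(V_i)$. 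The adjoint action $a_1xS(a_2)$ on the block $\operatorname{End}(V_i)$ is exactly the natural Hopf module structure on $\operatorname{End}(V_i)\cong V_i\otimes V_i^*$, whose character is $\chi_i\cdot s(\chi_i)$, since $V_i^*$ has character $s(\chi_i)$ and the product in $H^*$ gives the tensor product character. Summing over $i$ gives $\chi_{\operatorname{ad}}=\sum_i\chi_is(\chi_i)$, and hence $\chi_{\operatorname{reg}}(\Lambda^{\{m\}})=\chi_{\operatorname{ad}}(\Lambda^{\{m-1\}})$.

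The main point to verify carefully is this identification of the adjoint module with $\bigoplus V_i\otimes V_i^*$. I would check it directly: for $x=v\otimes f\in\operatorname{End}(V_i)$, we have $a_1xS(a_2)=a_1v\otimes f(S(a_2)\,\cdot)$, which is the action on $V_i\otimes V_i^*$.

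Part (2) then follows by substitution. For $m=1$, $\chi_i(e_0)=\delta_{i0}$ because $\Lambda=e_0$ and $d_0=1$. For $m=2$, $\Lambda^{\{2\}}=\sum_j d_j^{-2}e_j$ from Theorem \ref{t2}(3), so $\chi_i(\Lambda^{\{2\}})=d_i\cdot d_i^{-2}=1/d_i$. For $m>2$, insert the formula for $\Lambda^{\{m-1\}}$ from Theorem \ref{t2}(3) into part (1) and use linearity of $\langle\chi_is(\chi_i),-\rangle$. The index ranges $i_1,\dots,i_{m-2}$ then match, with the final pairing $\langle\chi_is(\chi_i),e_{i_{m-2}}\rangle$ and the prefactor $1/d_i$. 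Alternatively, apply $\chi_i$ directly to the $m$-th formula, using $\chi_i(e_{i_{m-1}})=d_i\delta_{i,i_{m-1}}$; both routes give the same expression.
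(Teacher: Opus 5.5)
Your proof is correct and follows the paper's argument. You get part (1) by applying $\chi_i$ to Theorem \ref{t2}(2) and summing with weights $d_i$, and part (2) by applying $\chi_i$ to Theorem \ref{t2}(3); the only addition is that you justify $\chi_{\operatorname{ad}}=\sum_i\chi_is(\chi_i)$ via the Wedderburn identification $H_{\operatorname{ad}}\cong\bigoplus_i V_i\otimes V_i^*$, a step the paper uses without comment.
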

\proof
(1) The equality  $\chi_i(\Lambda^{\{m\}})=\frac{1}{d_i}\langle\chi_is(\chi_i),\Lambda^{\{m-1\}}\rangle$ follows from Theorem \ref{t2}(2). Using this equality, we have
\begin{align*}
\chi_{\operatorname{reg}}(\Lambda^{\{m\}})&=\sum_{i=0}^{n-1}d_i\chi_i(\Lambda^{\{m\}})=\sum_{i=0}^{n-1}\langle\chi_is(\chi_i),\Lambda^{\{m-1\}}\rangle=\chi_{\operatorname{ad}}(\Lambda^{\{m-1\}}).
\end{align*}

Part (2) follows from Theorem \ref{t2}(3).
\qed

\begin{rem}
It can be found in the proof of \cite[Proposition 2.1(1)]{CW} that $$\langle\chi_{i}s(\chi_{i}),e_{j}\rangle=d_j\langle\chi_{i}s(\chi_{i})s(\chi_{j}),\Lambda\rangle$$ for $0\leq i,j\leq n-1$. Using this equality we can rewrite  $\Lambda^{\{m\}}$ and $\chi_i(\Lambda^{\{m\}})$ for $m>2$ appeared in Theorem \ref{t2}(3) and Corollary \ref{c1}(3) respectively as 
$$\Lambda^{\{m\}}=\sum_{i_1,\cdots,i_{m-1}=0}^{n-1}\frac{1}{d_{i_1}\cdots d_{i_{m-2}} d^2_{i_{m-1}}}\langle\chi_{i_2}s(\chi_{i_2})s(\chi_{i_1}),\Lambda\rangle\cdots \langle\chi_{i_{m-1}}s(\chi_{i_{m-1}})s(\chi_{i_{m-2}}),\Lambda\rangle e_{i_{m-1}}
$$ and
$$\chi_i(\Lambda^{\{m\}})=\sum_{i_1,\cdots,i_{m-2}=0}^{n-1}\frac{1}{d_{i_1}\cdots d_{i_{m-1}}}\langle\chi_{i_2}s(\chi_{i_2})s(\chi_{i_1}),\Lambda\rangle\cdots \langle\chi_{i_{m-1}}s(\chi_{i_{m-1}})s(\chi_{i_{m-2}}),\Lambda\rangle.$$
The form of $\Lambda^{\{m\}}$ appearing here is a version of  \cite[Proposition 2.2]{CW}, where $\Lambda^{\{m\}}$ is determined in terms of the commutator matrix associated to the map $T:Z(H)\rightarrow Z(H), T(a)=\{a,\Lambda\}$. The expressions of $\chi_i(\Lambda^{\{m\}})$ show that 
the gauge invariants  $\chi_i(\Lambda^{\{m\}})$ for  $m>2$ are completely determined by the fusion rules of the semisimple monoidal category $H$-mod. More explicitly, suppose $\chi_i\chi_j=\sum_{k=0}^{n-1}N_{ij}^k\chi_k$, where $N_{ij}^k$ are structure coefficients of the fusion ring of  $H$-mod with respect to the basis $\{\chi_0,\chi_1,\cdots,\chi_{n-1}\}$. Then $$\chi_i(\Lambda^{\{m\}})=\frac{1}{d_i}\sum_{i_1,\cdots,i_{m-2}=0}^{n-1}\frac{1}{d_{i_1}d_{i_2}\cdots d_{i_{m-2}}} N_{i_2i_2^*}^{i_1}N_{i_3i_3^*}^{i_2}\cdots N_{i_{m-2}i_{m-2}^*}^{i_{m-3}}N_{ii^*}^{i_{m-2}}.$$
\end{rem}

\section{\bf Example: finite groups}
 In this section, we will present the gauge invariants $\lambda(\Lambda^{\{2\}})$ and  $\lambda(\Lambda^{\{3\}})$ for a finite group. Let $G$ be a finite group and $\Lambda=\sum_{g\in G}g$. The field $\mathbbm{k}$  is assumed to be the field of complex numbers. The right integral $\lambda$ of $(\mathbbm{k}G)^*$ with $\lambda(\Lambda)=1$ is defined by $\lambda(g)=\delta_{g,e}$, where $e$ is the identity of $G$. Let $\operatorname{Irr}(G)$ denote the set of irreducible characters of $G$.

Obviously,
$$\Lambda^{\{2\}}=\sum_{a,b\in G}[a,b],$$where $[a,b]=aba^{-1}b^{-1}.$ Note that
\[
\lambda([a,b]) =
\begin{cases}
1, & \text{if}\ [a,b]=e,\\
0, & \text{otherwise}.
\end{cases}
\]
The condition $[a,b]=e$ is equivalent to $ab=ba$. Therefore, \begin{equation}\label{s1}\lambda(\Lambda^{\{2\}})=\#\{(a,b)\in G\times G\mid ab=ba\},\end{equation} which counts the number of ordered commuting pairs $(a,b)\in G\times G$.
For a fixed element $a\in G$, all the elements $b\in G$ that commute with $a$ are precisely the elements of the centralizer $C_G(a)$. Thus, the number of commuting ordered pairs is
\[
\lambda(\Lambda^{\{2\}})=\sum_{a\in G} |C_G(a)|.
\]
Now we rewrite this sum using the conjugacy classes of $G$. Let
\[
\mathcal{C}_1, \mathcal{C}_2, \dots, \mathcal{C}_{k(G)}
\]
be the distinct conjugacy classes of $G$. Choose a representative $a_i\in \mathcal{C}_i$ for each $i=1,\dots,k(G)$. By the orbit-stabilizer theorem, we have
\[
|\mathcal{C}_i| = \frac{|G|}{|C_G(a_i)|}.
\]
For any $a\in \mathcal{C}_i$, we have $|C_G(a)|=|C_G(a_i)|$. Therefore,
\begin{align}\label{s2}
\nonumber\lambda(\Lambda^{\{2\}})&=\sum_{a\in G} |C_G(a)|
= \sum_{i=1}^{k(G)} \sum_{a\in \mathcal{C}_i} |C_G(a)|
= \sum_{i=1}^{k(G)} |\mathcal{C}_i| \cdot |C_G(a_i)|\\
&= \sum_{i=1}^{k(G)} \frac{|G|}{|C_G(a_i)|} \cdot |C_G(a_i)|
= \sum_{i=1}^{k(G)} |G|
= |G|\, k(G).
\end{align}

Observe that
\[
\Lambda^{\{3\}}=\sum_{a,b,c\in G}[[a,b],c],
\] where $[[a,b],c]=aba^{-1}b^{-1}cbab^{-1}a^{-1}c^{-1}$.
By the definition of $\lambda$, we have
\[
\lambda(\Lambda^{\{3\}})
= \#\{(a,b,c)\in G\times G\times G \mid [[a,b],c]=e\}.
\]
For fixed $a,b\in G$, the number of $c\in G$ satisfying $[[a,b],c]=e$ is exactly the size of the centralizer of the commutator $[a,b]$. Hence
\[
\lambda(\Lambda^{\{3\}})=\sum_{a,b\in G} |C_G([a,b])|.
\]

Now let $N(g)=\#\{(a,b)\in G\times G \mid [a,b]=g\}$. The Frobenius formula tells us that
\[
N(g)=|G|\sum_{\chi\in\operatorname{Irr}(G)} \frac{\chi(g)}{\chi(1)}.
\] 
Note that $
|C_G([a,b])| = \sum_{g\in G}\delta_{g,[a,b]} \, |C_G(g)|.
$ Substituting this into the expression for $\lambda(\Lambda^{\{3\}})$, we obtain 
\[
\lambda(\Lambda^{\{3\}})
= \sum_{a,b\in G} \sum_{g\in G}\delta_{g,[a,b]} \, |C_G(g)|= \sum_{g\in G}|C_G(g)|\left(\sum_{a,b\in G}\delta_{g,[a,b]} \right)
\]
The sum $\sum_{a,b\in G}\delta_{g,[a,b]}$ counts the number of pairs $(a,b)$ with $[a,b]=g$, i.e., it equals $N(g)$. Therefore,
\[
\lambda(\Lambda^{\{3\}})
= \sum_{g\in G} N(g) |C_G(g)|=|G| \sum_{\chi\in\operatorname{Irr}(G)} \frac{1}{\chi(1)}
  \sum_{g\in G} \chi(g) |C_G(g)|.
\]
Recall the following identity in finite group character theory:
\[
\begin{aligned}
\sum_{g\in G} \chi(g)|C_G(g)|
&= \sum_{g\in G} \chi(g) \sum_{\psi\in\operatorname{Irr}(G)} \psi(g)\overline{\psi(g)} \quad \text{(column orthogonality)}\\
&= \sum_{\psi\in\operatorname{Irr}(G)} \sum_{g\in G} \chi(g)\psi(g)\overline{\psi(g)}= \sum_{\psi\in\operatorname{Irr}(G)} |G| \langle \psi\overline{\psi}, \overline{\chi} \rangle\\
&= |G| \sum_{\psi\in\operatorname{Irr}(G)} \langle \psi\overline{\psi}, \overline{\chi} \rangle,
\end{aligned}
\] where $\langle \cdot,\cdot \rangle$ is the standard inner product on class functions.
Therefore,
\[
\lambda(\Lambda^{\{3\}})
= |G| \sum_{\chi\in\operatorname{Irr}(G)} \frac{1}{\chi(1)}
  \left( |G| \sum_{\psi\in\operatorname{Irr}(G)} \langle \psi\overline{\psi}, \overline{\chi} \rangle \right)
= |G|^2 \sum_{\chi,\psi\in\operatorname{Irr}(G)}
  \frac{\langle \psi\overline{\psi},\overline{ \chi }\rangle}{\chi(1)}.
\]

\section{\bf Example: quantum double of a finite group}

In this section, we will compute the gauge invariant $\lambda(\Lambda^{\{2\}})$ for a quantum double of a finite group. Let $G$ be a finite group with unit $e$, and let $\mathbbm{k}G$ be the group algebra over a field $\mathbbm{k}$ of characteristic zero. The dual space $(\mathbbm{k}G)^*$ has a basis given by the delta functions $\delta_g$ for $g\in G$, i.e., $\delta_g(g')=\delta_{g,g'}$ for any $g'\in G$. With the multiplication $\delta_g\delta_h=\delta_{g,h}\delta_h$, the dual $(\mathbbm{k}G)^*$ is an algebra with unit $\sum_{g\in G}\delta_g$. Let $D(G)$ be the Drinfeld double of $G$. It is a semisimple Hopf algebra with a basis given by $\{\delta_h\bowtie g\mid h, g\in G\}.$
The multiplication rule in $D(G)$ is
\[
(\delta_h\bowtie  g)(\delta_a\bowtie  b) = \delta_h \delta_{g a g^{-1}}\bowtie  g b = \delta_{g^{-1} h g, a} \delta_h\bowtie  g b\ \text{for}\ h,g,a,b\in G.
\]

The identity element of $D(G)$ is
\[
\sum_{g \in G} \delta_g\bowtie e.
\]
The comultiplication of $D(G)$ is given by
\[
\Delta(\delta_h\bowtie  g) = \sum_{a, b \in G, \, ab = h} (\delta_b\bowtie  g) \otimes (\delta_a\bowtie  g)\ \text{for}\ h,g\in G.
\]
The antipode of $D(G)$ is given by
\[
S(\delta_h\bowtie  g)= \delta_{g^{-1}h^{-1}g}\bowtie g^{-1}\ \text{for}\ h,g\in G.
\]
The counit of  $D(G)$ is given by
\[
\varepsilon(\delta_h \bowtie  g) = \delta_{h,e}\ \text{for}\ h,g\in G.
\]
As a Hopf algebra, $D(G)$ is unimodular with a two-sided integral $\Lambda=\sum_{g\in G}\delta_e\bowtie g$. It is easy to see that
$$\Delta(\Lambda)=\Lambda_1\otimes\Lambda_2=\sum_{g,h\in G}(\delta_h\bowtie g)\otimes(\delta_{h^{-1}}\bowtie g),$$
$$S(\Lambda_1)\otimes\Lambda_2=\sum_{g,h\in G}(\delta_{g^{-1}h^{-1}g}\bowtie g^{-1})\otimes(\delta_{h^{-1}}\bowtie g),$$
$$\Lambda_1\otimes S(\Lambda_2)=\sum_{g,h\in G}(\delta_{h}\bowtie g)\otimes(\delta_{g^{-1}hg}\bowtie g^{-1}).$$
Let $\widetilde{\Lambda}$ be a copy of $\Lambda.$
It is straightforward that 
\begin{align*}\Lambda^{\{2\}}&=\Lambda_1\widetilde{\Lambda}_1S(\Lambda_2)S(\widetilde{\Lambda}_2)\\
&=\sum_{a,b,g,h\in G}\delta_{g^{-1}hg,a}\delta_{b^{-1}g^{-1}hgb,g^{-1}hg}\delta_{gb^{-1}g^{-1}hgbg^{-1},b^{-1}ab}(\delta_h\bowtie gbg^{-1}b^{-1}).
\end{align*}
Note that a right integral $\lambda$ of $D(G)^*$ with $\lambda(\Lambda)=1$ has the form  $\lambda=\sum_{v\in G}v\bowtie \delta_e$. Therefore,
\begin{align*}\lambda(\Lambda^{\{2\}})&=
\sum_{v,a,b,g,h\in G}\delta_{g^{-1}hg,a}\delta_{b^{-1}g^{-1}hgb,g^{-1}hg}\delta_{gb^{-1}g^{-1}hgbg^{-1},b^{-1}ab}\langle v\bowtie \delta_e,\delta_h\bowtie gbg^{-1}b^{-1}\rangle\\
&=\sum_{v,a,b,g,h\in G}\delta_{g^{-1}hg,a}\delta_{b^{-1}g^{-1}hgb,g^{-1}hg}\delta_{gb^{-1}g^{-1}hgbg^{-1},b^{-1}ab}\langle v,\delta_h\rangle\langle\delta_e, gbg^{-1}b^{-1}\rangle\\
&=\sum_{a,b,g,h\in G}\delta_{g^{-1}hg,a}\delta_{b^{-1}g^{-1}hgb,g^{-1}hg}\delta_{gb^{-1}g^{-1}hgbg^{-1},b^{-1}ab}\delta_{gbg^{-1}b^{-1},e}\\
&=\sum_{b,g,h\in G}\delta_{b^{-1}g^{-1}hgb,g^{-1}hg}\delta_{gb^{-1}g^{-1}hgbg^{-1},b^{-1}g^{-1}hgb}\delta_{gbg^{-1}b^{-1},e}\\
&=\sum_{b,g,h\in G}\delta_{hb,bh}\delta_{gh,hg}\delta_{gb,bg},
\end{align*} 
which is the number of triples of pairwise commuting elements of $G$.  
Let $C_G(g)$ be the centralizer of $g$  and let $k(C_G(g))$ denote the number of conjugacy classes of the finite group $C_G(g)$. We have
\begin{align*}\lambda(\Lambda^{\{2\}})&=\#\{(g,h,b)\in G\times G\times G\mid gh=hg,hb=bh,gb=bg\}\\
&=\sum_{g\in G}\Bigl(\#\{(h,b)\in C_G(g)\times C_G(g)\mid hb=bh\} \Bigl)\\
&=\sum_{g\in G}|C_G(g)|k(C_G(g))\ \text{by}\ (\ref{s1})\ \text{and}\ (\ref{s2})
\end{align*}

We point out that if $G$ is an abelian group, then  $\lambda(\Lambda^{\{2\}})=|G|^3.$

\section{\bf Example: Restricted quantum groups}
In this section, we consider the restricted quantum group $\overline{U}_q(sl_2)$ at a primitive $2p$-th root of unity $q$.  We compute the twisting invariants $\chi_V(\Lambda^{\{2\}})$ and $\chi_{\text{reg}}(\Lambda^{\{2\}})$ respectively, where $V$ is a simple $\overline{U}_q(sl_2)$-module of dimension $p$ generated by a highest weight vector. It turns out that $\chi_{\text{reg}}(\Lambda^{\{2\}})/\chi_V(\Lambda^{\{2\}})=2p$, which is a trivial gauge invariant of $\overline{U}_q(sl_2)$.

\subsection{$q$-binomial coefficients}

Let $p$ be a positive integer and $q$ a primitive $2p$-th root of unity (so that $q^p=-1$).  We work with the standard $q$-analogues:
\[
[n]=\frac{q^n-q^{-n}}{q-q^{-1}},\qquad [n]!=[1][2]\cdots[n],\qquad [0]!=1.
\]
The $q$-binomial coefficient is defined by
\[
\begin{bmatrix} m \\ n \end{bmatrix}
=
\begin{cases}
0, & n < 0 \quad \text{or} \quad m - n < 0, \\[1.2ex]
\dfrac{[m]!}{[n]! \, [m - n]!}, & \text{otherwise.}
\end{cases}
\]

Since $q^p=-1$, we have $[p-k]=[k]$, hence
\begin{align*}
[p-1]! &= \prod_{i=1}^{p-1}[i] = \left(\prod_{i=1}^{s}[i]\right)\left(\prod_{i=s+1}^{p-1}[i]\right)
= [s]!\,\prod_{j=1}^{p-1-s}[p-j]\\
&=[s]!\,\prod_{j=1}^{p-1-s}[j]= [s]!\,[p-1-s]!.
\end{align*}
Thus, the $q$-binomial coefficient 
\begin{equation}\label{eq:binom}\begin{bmatrix} p-1 \\ s \end{bmatrix}=\frac{[p-1]!}{[s]![p-1-s]!}
=1\ \text{for}\ 0\le s\le p-1.\end{equation}

\subsection{Restricted quantum groups}

The restricted quantum group $\overline{U}_q(sl_2)$ is a Hopf algebra generated by $E,F,K$ over a field $\mathbbm{k}$ of characteristic zero subject to
\[
E^p=F^p=0,\qquad K^{2p}=1,\qquad KEK^{-1}=q^2E,\qquad KFK^{-1}=q^{-2}F,
\]
and
\[
[E,F]=\frac{K-K^{-1}}{q-q^{-1}}.
\]
The comultiplication, counit, and antipode are given respectively by
\[
\Delta(E)=1\otimes E+E\otimes K,\qquad
\Delta(F)=K^{-1}\otimes F+F\otimes 1,\qquad
\Delta(K)=K\otimes K,
\]
\[
\varepsilon(E)=\varepsilon(F)=0,\quad \varepsilon(K)=1,
\]
\[
S(E)=-EK^{-1},\qquad S(F)=-KF,\qquad S(K)=K^{-1}.
\]
It is well known that the algebra $\overline{U}_q(sl_2)$ has a basis $\{E^iK^jF^l\mid 0\le i,l\le p-1,\;0\le j\le 2p-1\}$, hence the dimension of $\overline{U}_q(sl_2)$ is $2p^3$, see e.g. \cite{FGST} for details.

For any integer $s$, we set
\[
[K;s]=\frac{Kq^s-K^{-1}q^{-s}}{q-q^{-1}}.
\]
The following basic properties are immediate from $q^p=-1$:
\begin{align}\label{e4}[K;s]E^{p-1}=E^{p-1}[K;s-2],\ \ \  [K^{-1};s]=-[K;-s]\end{align}
and
\begin{align}\label{e5}
[K;s+p]=\frac{Kq^{s+p}-K^{-1}q^{-(s+p)}}{q-q^{-1}}
=\frac{-Kq^{s}+K^{-1}q^{-s}}{q-q^{-1}}
=-[K;s].
\end{align}

The following commutation formulae are standard (see, e.g., \cite{KS}):

For $m\geq n\geq0$, we have
\begin{align}\label{e2}E^{n}F^{m}&=\sum_{i=0}^{n}\frac{[n]![m]!}{[i]![n-i]![m-i]!}\\
\nonumber &\quad \times F^{m-i}E^{n-i}[K;n-m][K;n-m-1]\cdots[K;n-m-i+1].
\end{align}
Applying the automorphism of $\overline{U}_q(sl_2)$ given by $E\mapsto F,F\mapsto E,K\mapsto K^{-1}$ to both sides of the equality (\ref{e2}), we have
\begin{align}\label{e3}F^{n}E^{m}&=\sum_{i=0}^{n}\frac{[n]![m]!}{[i]![n-i]![m-i]!}\\
\nonumber &\quad \times E^{m-i}F^{n-i}[K^{-1};n-m][K^{-1};n-m-1]\cdots[K^{-1};n-m-i+1].
\end{align}

From \cite[Eq.(3.1)]{FGST}, the  comultiplication of the monomial $F^m E^n K^j$ is
\[
\begin{aligned}
\Delta(F^m E^n K^j)
&= \sum_{r=0}^m \sum_{s=0}^n 
q^{2(n-s)(r-m)+r(m-r)+s(n-s)}
\begin{bmatrix} m \\ r \end{bmatrix}
\begin{bmatrix} n \\ s \end{bmatrix} \\
&\quad \times F^r E^{n-s} K^{r-m+j} \otimes F^{m-r} E^s K^{n-s+j}. 
\end{aligned}
\]

The left and right integrals of $\overline{U}_q(sl_2)$ coincide and have the form \cite[Eq.(3.2)]{FGST}: $$\Lambda=F^{p-1}E^{p-1}\sum_{j=0}^{2p-1}K^{j}.$$
Using (\ref{eq:binom}),  a straightforward computation gives
\begin{equation}\Delta(\Lambda)=\sum_{j=0}^{2p-1}\sum_{r,s=0}^{p-1}(-1)^{r+s}q^{-(r+s+1)(r+s+2)}F^{r}E^{p-1-s}K^{r+j+1-p}\otimes F^{p-1-r}E^{s}K^{p+j-1-s}.\end{equation}
Consequently, applying the antipode to the second tensor factor, we obtain
\begin{equation}\label{e02}\Lambda_1\otimes S(\Lambda_2)=\sum_{j=0}^{2p-1}\sum_{r,s=0}^{p-1}q^{-2(r+s+1)(r+j)}F^{r}E^{p-1-s}K^{r+j+1-p}\otimes E^sF^{p-1-r}K^{-r-j}\end{equation}
and similarly,
\begin{equation}\label{e002}S^{-1}(\Lambda_1)\otimes \Lambda_2=\sum_{j=0}^{2p-1}\sum_{r,s=0}^{p-1}q^{2(r+s+1)(j-s)}E^{p-1-s}F^rK^{s-j}\otimes F^{p-1-r}E^sK^{p-1-s+j}.\end{equation}
Note that $\Lambda^{\{2\}}=\{\Lambda,\widetilde{\Lambda}\}=\Lambda_1\widetilde{\Lambda}_2S(\Lambda_2)S^{-1}(\widetilde{\Lambda}_1)$, where $\widetilde{\Lambda}=\Lambda$.
Substituting (\ref{e02}) and (\ref{e002}) and simplifying yields
\begin{align}\label{e6}
\Lambda^{\{2\}}=4p^2\sum_{0\leq s\leq r \leq p-1}F^{p-1-s}E^{p-1-s}F^{p-1-r}E^{p-1-r+s}F^sE^rF^r.
\end{align}

\subsection{The twisting invariant $\chi_V(\Lambda^{\{2\}})$}
Let $V$ be a $p$-dimensional $\overline{U}_q(sl_2)$-module with a basis $\{v_0,\dots,v_{p-1}\}$. The $\overline{U}_q(sl_2)$-action is given by
\[
K v_n = q^{p-1-2n} v_n,\qquad
E v_n = [n][p-n] v_{n-1},\qquad
F v_n = v_{n+1},
\]
with $v_p=v_{-1}=0$.  This is a simple $\overline{U}_q(sl_2)$-module, see \cite[Sec.3.2.1]{FGST}.
We now compute $\chi_V(\Lambda^{\{2\}})$.

\begin{prop}We have
$
\chi_V(\Lambda^{\{2\}}) = 4p^3 ([p-1]!)^4.
$
\end{prop}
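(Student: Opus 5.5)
We plan to start from the explicit formula (\ref{e6}),
\[
\Lambda^{\{2\}}=4p^2\sum_{0\leq s\leq r\leq p-1}F^{p-1-s}E^{p-1-s}F^{p-1-r}E^{p-1-r+s}F^sE^rF^r,
\]
and compute the trace of each monomial acting on $V$ in the basis $\{v_0,\dots,v_{p-1}\}$. The first observation is weight bookkeeping: $F$ lowers the index $n$ by raising it to $n+1$, and $E$ sends $v_n$ to a multiple of $v_{n-1}$. Each monomial contains $(p-1-s)+(p-1-r)+s+r=2p-2$ factors of $F$ and $(p-1-s)+(p-1-r+s)+r=2p-2$ factors of $E$, so it preserves the weight spaces. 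Its trace is therefore a sum of diagonal coefficients $\langle v_n^*, X v_n\rangle$, and $\chi_V(\Lambda^{\{2\}})$ becomes an explicit finite sum over $s\le r$ and $n$.

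Next we determine which $(s,r,n)$ give nonzero diagonal entries. We follow $v_n$ through the word, read from right to left. The final $F^r$ needs $n+r\le p-1$. Then $E^r$ brings the index back to $n$, with coefficient $\prod_{k=n+1}^{n+r}[k][p-k]$. Then $F^s$ requires $n+s\le p-1$, and $E^{p-1-r+s}$ requires $n+s\ge p-1-r+s$, that is $n\ge p-1-r$. Combined with $n+r\le p-1$, this forces $n=p-1-r$. Continuing, after $E^{p-1-r+s}$ the index is $0$. Then $F^{p-1-r}$ takes it to $p-1-r$, $E^{p-1-s}$ needs $p-1-r\ge p-1-s$, i.e.\ $s\ge r$, so $s=r$. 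Finally $F^{p-1-s}$ returns to index $n=p-1-r$ as required. So only the diagonal terms $s=r$, $n=p-1-r$ survive, for $r=0,\dots,p-1$, and each contributes a product of factors $[k][p-k]$.

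For such a term we collect the coefficients. Each application of $E$ from $v_k$ to $v_{k-1}$ contributes $[k][p-k]=[k]^2$, using $[p-k]=[k]$ from the $q$-binomial subsection. The three $E$-blocks contribute as follows. The block $E^r$ acts on indices $p-r,\dots,p-1$. The block $E^{p-1}$ (since $s=r$) acts on indices $1,\dots,p-1$. The block $E^{p-1-r}$ acts on indices $1,\dots,p-1-r$. The product is
\[
\Bigl(\prod_{k=p-r}^{p-1}[k]\Bigr)^2([p-1]!)^2\Bigl(\prod_{k=1}^{p-1-r}[k]\Bigr)^2 .
\]
Since $\prod_{k=p-r}^{p-1}[k]=[r]!$ and $[r]!\,[p-1-r]!=[p-1]!$, each term equals $([p-1]!)^4$, independently of $r$. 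Summing over the $p$ values of $r$ and multiplying by the prefactor $4p^2$ gives $4p^3([p-1]!)^4$.

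The main obstacle is the careful tracking of index constraints and of the direction in which each $E$ coefficient is taken; an off-by-one error here would change which terms survive. We will double-check the case $p=2$ directly as a sanity test.
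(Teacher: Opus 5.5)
Your proof is correct. It reaches $4p^3([p-1]!)^4$ by a more direct route than the paper, though both start from (\ref{e6}).

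The paper works inside the algebra before touching $V$:
\begin{enumerate}
\item It uses cyclicity of the trace to move $F^r$ to the front, so that $F^{p-1-s+r}=0$ kills every term with $r>s$.
\item It expands $E^{p-1-s}F^{p-1-s}$ and $F^sE^s$ by the commutation formulas (\ref{e2}) and (\ref{e3}). Only the top terms survive, because of the flanking $F^{p-1}$ and $E^{p-1}$.
\item It uses (\ref{e4}) to rewrite each surviving word as $[p-1]!\,F^{p-1}E^{p-1}T(K;s)$.
\item Only then does it evaluate on $V$. There $F^{p-1}E^{p-1}$ has rank one, supported on $v_{p-1}$, and $T(K;s)$ acts on $v_{p-1}$ by $[p-1]!$.
\end{enumerate}

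You instead read each word directly on the basis $v_0,\dots,v_{p-1}$. The vanishing conditions $Fv_{p-1}=0$ and $Ev_0=0$ force $s=r$ and $n=p-1-r$. The surviving coefficient is $([r]!\,[p-1]!\,[p-1-r]!)^2=([p-1]!)^4$, which uses only $[p-k]=[k]$.

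What your version buys: it avoids the $q$-commutation formulas entirely, along with the sign bookkeeping for $[K^{-1};\cdot]$ and the evaluation $[-q;r]=[-r-1]$. It also shows that each basis vector $v_n$ receives exactly one nonzero contribution. So $\Lambda^{\{2\}}$ acts on $V$ by the scalar $4p^2([p-1]!)^4$, a useful consistency check with its centrality (Proposition \ref{p1}).

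What the paper's version buys: its reductions use only algebra relations and cyclicity of the trace. They therefore give the module-independent identity $\chi_W(\Lambda^{\{2\}})=4p^2[p-1]!\sum_{s=0}^{p-1}\chi_W(F^{p-1}E^{p-1}T(K;s))$ for every finite-dimensional module $W$. Manipulations of the same kind reappear in its computation of $\chi_{\operatorname{reg}}(\Lambda^{\{2\}})$.
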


\begin{proof}
Substituting (\ref{e6}) and using $\chi(ab)=\chi(ba)$, we have
\begin{align*}
\chi_V(\Lambda^{\{2\}})&=4p^2\sum_{0\leq s\leq r \leq p-1}\chi_V(F^{p-1-s}E^{p-1-s}F^{p-1-r}E^{p-1-r+s}F^sE^rF^r)\\
&=4p^2\sum_{0\leq s\leq r \leq p-1}\chi_V(F^{p-1-s+r}E^{p-1-s}F^{p-1-r}E^{p-1-r+s}F^sE^r)\\
&=4p^2\sum_{s=0}^{p-1}\chi_V(F^{p-1}E^{p-1-s}F^{p-1-s}E^{p-1}F^sE^s)\\
&=4p^2[p-1]!\sum_{s=0}^{p-1}\chi_V(F^{p-1}[K;0][K;-1]\cdots[K;2+s-p]\\
&\quad \times E^{p-1}[K^{-1};0][K^{-1};-1]\cdots[K^{-1};1-s])\ \text{by}\ (\ref{e2})\ \text{and}\  (\ref{e3}) \\  
&=4p^2[p-1]!\sum_{s=0}^{p-1}\chi_V(F^{p-1}E^{p-1}[K;-2][K;-3]\cdots[K;s-p]\\
&\quad \times (-1)^s[K;0][K;1]\cdots[K;s-1])\ \text{by}\ (\ref{e4})\\
&=4p^2[p-1]!\sum_{s=0}^{p-1}\chi_V(F^{p-1}E^{p-1}T(K;s)),
\end{align*}
where $T(K;s)=[K;-2][K;-3]\cdots[K;s-p](-1)^s[K;0][K;1]\cdots[K;s-1]$. Note that $K v_{p-1}= -q v_{p-1}.$ The action of $T(K;s)$ on $v_{p-1}$ is
\begin{align*}
T(K;s)v_{p-1}=(-1)^s[-q;-2][-q;-3]\cdots[-q;s-p][-q;0][-q;1]\cdots[-q;s-1]v_{p-1}.
\end{align*}
Since $[-q;r]=[-r-1]$ and $[-r]=-[r]$, it follows that
\begin{align*}
T(K;s)v_{p-1}&=(-1)^s[1][2]\cdots[p-s-1][-1][-2]\cdots[-s]v_{p-1}\\
&=[1][2]\cdots[p-s-1][1][2]\cdots[s]v_{p-1}\\
&=[p-1]!v_{p-1}\ \text{by}\ (\ref{eq:binom}).
\end{align*}
Note that $E v_n = [n][p-n] v_{n-1}$ and
$F v_n = v_{n+1}$ for $0\leq n\leq p-1$. We obtain that
$$F^{p-1}E^{p-1}v_n=0\ \text{for}\  0\leq n\leq p-2\ \text{and}\ F^{p-1}E^{p-1}v_{p-1}=([p-1]!)^2v_{p-1}.$$ 
Thus
\[
\chi_V\Bigl( F^{p-1}E^{p-1}T(K;s)\Bigr)
=([p-1]!)^3.
\]
Therefore,
\[
\chi_V(\Lambda^{\{2\}})
=4p^2[p-1]!\sum_{s=0}^{p-1}([p-1]!)^3
=4p^3 ([p-1]!)^4.
\]
This completes the proof. 
\end{proof}

\subsection{The twisting invariant $\chi_{\operatorname{reg}}(\Lambda^{\{2\}})$}
Let $\lambda$ be the right integral of $\overline{U}_q(sl_2)^*$ such that $\lambda(\Lambda)=1$, where $\Lambda=F^{p-1}E^{p-1}\sum_{j=0}^{2p-1}K^{j}.$ By \cite[Sec.3.1.2]{FGST}, $\lambda$ is given by \begin{equation}\label{e9}
\lambda(F^iE^mK^n)=\delta_{i,p-1}\delta_{m,p-1}\delta_{n,p+1}.\end{equation}
According to Remark \ref{rem1}, we have
\[
\chi_{\operatorname{reg}}(\Lambda^{\{2\}})=\lambda(S(\Lambda_2)\Lambda_1\Lambda^{\{2\}}).
\]
We now use this formula to compute $\chi_{\operatorname{reg}}(\Lambda^{\{2\}})$.

\begin{prop}We have
$
\chi_{\operatorname{reg}}(\Lambda_{\{2\}}) = 8p^4([p-1]!)^4.
$
\end{prop}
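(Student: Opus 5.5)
We use Remark \ref{rem1}: $\chi_{\operatorname{reg}}(\Lambda^{\{2\}})=\lambda(S(\Lambda_2)\Lambda_1\Lambda^{\{2\}})$, together with the explicit form (\ref{e6}) of $\Lambda^{\{2\}}$ and the formula (\ref{e9}) for $\lambda$. The key simplification is that $\Lambda^{\{2\}}$ is central, by Proposition \ref{p1}(2). So $\lambda(S(\Lambda_2)\Lambda_1\Lambda^{\{2\}})=\lambda(\Lambda^{\{2\}}\,u)$ with $u:=S(\Lambda_2)\Lambda_1$, and we may place the factor $\Lambda^{\{2\}}$ wherever convenient.

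First I will compute $u=S(\Lambda_2)\Lambda_1$ from the coproduct of $\Lambda$ displayed before (\ref{e02}). Applying $S$ to the second tensor factor gives terms of the form $E^sF^{p-1-r}K^{\ast}\cdot F^rE^{p-1-s}K^{\ast}$. By Remark \ref{rem1}, $u$ represents $\chi_{\operatorname{reg}}$ against $\lambda$, so it is a central-type element. I expect it to reduce, after summing over $j$, to a combination of the form $c\,\sum_j K^{j}$-weighted PBW monomials.

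A cleaner alternative is to avoid $u$ entirely and compute $\chi_{\operatorname{reg}}(\Lambda^{\{2\}})=\operatorname{Tr}(L_{\Lambda^{\{2\}}})$ directly on the PBW basis $\{F^iE^mK^n\}$, since $\Lambda^{\{2\}}$ acts by left multiplication. To do this, I would first bring (\ref{e6}) to normal form. Using (\ref{e2}), (\ref{e3}) and (\ref{e4}) exactly as in the preceding proposition, each summand should collapse to $F^{p-1}E^{p-1}$ times a polynomial in $K$. The preceding computation suggests $\Lambda^{\{2\}}=4p^2[p-1]!\sum_{s}F^{p-1}E^{p-1}T(K;s)$ modulo terms whose trace vanishes. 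Here I must be careful: the earlier step used cyclicity of $\chi_V$. The trace of $L_x$ is also cyclic in $x$, but only as a function; it is not a rewriting of $x$ itself. Since $\chi_{\operatorname{reg}}$ is a character, the same cyclic rearrangements are legitimate. Hence
\[
\chi_{\operatorname{reg}}(\Lambda^{\{2\}})=4p^2[p-1]!\sum_{s=0}^{p-1}\chi_{\operatorname{reg}}\bigl(F^{p-1}E^{p-1}T(K;s)\bigr).
\]

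The remaining and main step is to compute $\chi_{\operatorname{reg}}(F^{p-1}E^{p-1}f(K))$. I will use the composition factors of the regular module: $\chi_{\operatorname{reg}}$ is the sum over simple modules $X$ of $\dim P_X\cdot\chi_X$, where $P_X$ is the projective cover. The element $F^{p-1}E^{p-1}f(K)$ kills every simple module except those with a highest weight vector of appropriate weight, namely the two $p$-dimensional Steinberg-type simples $V=\mathcal X^{\pm}_p$. On $V$ the computation above gives $([p-1]!)^3$. For the twin $p$-dimensional simple, with $K$ acting on the lowest vector by $+q$, I expect by the same $[-q;r]$-type computation the same value. Each of these simples is projective with multiplicity $p$ in the regular module. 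Summing, $\chi_{\operatorname{reg}}=2p\cdot([p-1]!)^3$ per $s$, and therefore
\[
\chi_{\operatorname{reg}}(\Lambda^{\{2\}})=4p^2[p-1]!\cdot p\cdot 2p([p-1]!)^3=8p^4([p-1]!)^4.
\]

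The obstacle is verifying that the non-projective simples $\mathcal X^{\pm}_s$ with $s<p$ contribute zero, which holds because $E^{p-1}$ annihilates them. One must also check the sign for the twin module; if the sign were off, the answer would be $0$ rather than $8p^4([p-1]!)^4$. As a cross-check I would also evaluate $\lambda(\Lambda^{\{2\}}u)$ via (\ref{e9}).
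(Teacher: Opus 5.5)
Your argument is correct, and it takes a genuinely different route from the paper's.

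\textbf{The paper's route.} It works through the integral. It computes $S(\Lambda_2)\Lambda_1=2p^2\sum_{s}E^sF^{p-1}E^{p-1-s}K^{1-p}$ from (\ref{e02}). Using centrality of $\Lambda^{\{2\}}$, $F^p=0$, (\ref{e2}), (\ref{e4}) and (\ref{equ4004}), it reduces to $8p^5([p-1]!)^2\,\lambda(F^{p-1}E^{p-1}P(K)^2K^{1-p})$. It then extracts the constant term of $P(K)^2$ by averaging over $K=q^m$, where only $m=1$ and $m=p+1$ survive.

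\textbf{Your route.} You never use $\lambda$ or $S(\Lambda_2)\Lambda_1$. Since $\chi_{\operatorname{reg}}$ is a trace, the reduction from the $\chi_V$ computation carries over verbatim. Additivity of traces along a composition series of ${}_HH$ then leaves only $\mathcal{X}^{\pm}_p$, because $E^{p-1}$ kills every simple module of dimension less than $p$. Each $\mathcal{X}^{\pm}_p$ has multiplicity $p$: it is simple and projective, hence injective, so it spans a matrix block. You do not even need the general formula $[H:X]=\dim P_X$.

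\textbf{What each route buys.} Yours needs external input: the classification of simple $\overline{U}_q(sl_2)$-modules and the projectivity of $\mathcal{X}^{\pm}_p$. In return it is more conceptual. The paper's two surviving Fourier points, $K=q^{p+1}=-q$ and $K=q$, are exactly the $K$-eigenvalues on the lowest weight vectors of $\mathcal{X}^+_p$ and $\mathcal{X}^-_p$. It also makes the ratio $2p$ in the subsequent remark transparent, since $\chi_{\operatorname{reg}}(\Lambda^{\{2\}})=p\,\chi_{\mathcal{X}^+_p}(\Lambda^{\{2\}})+p\,\chi_{\mathcal{X}^-_p}(\Lambda^{\{2\}})=2p\,\chi_V(\Lambda^{\{2\}})$.

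\textbf{The step you left as an expectation.} The value on $\mathcal{X}^-_p$ does come out right, but not for the reason you suggest. On $\mathcal{X}^-_p$ one has $Kv_n=-q^{p-1-2n}v_n$, $Ev_n=-[n][p-n]v_{n-1}$ and $Fv_n=v_{n+1}$. So $Kv_{p-1}=qv_{p-1}$, $[q;r]=[r+1]$, and $T(q;s)=(-1)^{p-1}[p-1]!$.
\begin{itemize}
\item The $K$-part alone therefore differs from the $V$ case by $(-1)^{p-1}$. For even $p$, ``the same $[-q;r]$-type computation'' would produce exactly the cancellation to $0$ you feared.
\item What saves it is the sign in the $E$-action: $F^{p-1}E^{p-1}v_{p-1}=(-1)^{p-1}([p-1]!)^2v_{p-1}$. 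The product of the two signs is $1$, so the value is again $([p-1]!)^3$.
\item A cleaner way to see this: the algebra automorphism $E\mapsto -E$, $F\mapsto F$, $K\mapsto -K$ pulls $V$ back to $\mathcal{X}^-_p$. It fixes $F^{p-1}E^{p-1}T(K;s)$, because $T(K;s)$ is a product of $p-1$ factors $[K;r]$, each odd under $K\mapsto -K$.
\end{itemize}
With this check written out, your argument gives $8p^4([p-1]!)^4$. Your opening paragraph on $u=S(\Lambda_2)\Lambda_1$ can be dropped; that is the paper's route.
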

\proof
First, from (\ref{e02}) we have
\begin{align*}
S(\Lambda_2)\Lambda_1&=\sum_{j=0}^{2p-1}\sum_{r,s=0}^{p-1}q^{-2(r+s+1)(r+j)}E^sF^{p-1-r}K^{-r-j}F^{r}E^{p-1-s}K^{r+j+1-p}\\
&=\sum_{j=0}^{2p-1}\sum_{r,s=0}^{p-1}E^sF^{p-1}E^{p-1-s}K^{1-p}\\
&=2p^2\sum_{s=0}^{p-1}E^sF^{p-1}E^{p-1-s}K^{1-p}
\end{align*}
Multiplying by $\Lambda^{\{2\}}$ (which is central) and using (\ref{e2}), we have
\begin{align*}
S(\Lambda_2)\Lambda_1\Lambda^{\{2\}}
&=2p^2\sum_{t=0}^{p-1}E^tF^{p-1}\Lambda^{\{2\}}E^{p-1-t}K^{1-p}\\
&=8p^4\sum_{t=0}^{p-1}\sum_{0\leq s\leq r \leq p-1}E^tF^{p-1}F^{p-1-s}E^{p-1-s}F^{p-1-r}E^{p-1-r+s}F^sE^rF^rE^{p-1-t}K^{1-p}\\
&=8p^4\sum_{t=0}^{p-1}E^tF^{p-1}(E^{p-1}F^{p-1})(E^{p-1}F^{p-1})E^{p-1-t}K^{1-p}\\
&=8p^4([p-1]!)^2\sum_{t=0}^{p-1}E^tF^{p-1}([K;0][K;-1]\cdots[K;2-p])^2E^{p-1-t}K^{1-p}.
\end{align*}
Now
\begin{align*}
\chi_{\operatorname{reg}}(\Lambda^{\{2\}})&=\lambda(S(\Lambda_2)\Lambda_1\Lambda^{\{2\}})\\
&=8p^4([p-1]!)^2\sum_{t=0}^{p-1}\lambda(E^tF^{p-1}([K;0][K;-1]\cdots[K;2-p])^2E^{p-1-t}K^{1-p})\\
&=8p^4([p-1]!)^2\sum_{t=0}^{p-1}\lambda(F^{p-1}([K;0][K;-1]\cdots[K;2-p])^2\\
&\qquad \times  E^{p-1-t}K^{1-p}S^{-2}(E^t))\ \text{by}\ (\ref{equ4004})\\
&=8p^4([p-1]!)^2\sum_{t=0}^{p-1}\lambda(F^{p-1}([K;0][K;-1]\cdots[K;2-p])^2E^{p-1}K^{1-p})\\
&=8p^5([p-1]!)^2\lambda(F^{p-1}E^{p-1}([K;-2][K;-3]\cdots[K;-p])^2K^{1-p})\ \text{by}\ (\ref{e4})\\
&=8p^5([p-1]!)^2\lambda(F^{p-1}E^{p-1}P(K)^2K^{1-p}),
\end{align*}
where $P(K)$ is the Laurent polynomial with the variable $K$:
\[
P(K)=\prod_{r=-p}^{-2}[K;r].
\]
Let $\mu$ be the coefficient of \(K^0\) in \(P(K)^2\). According to the definition of $\lambda$ given in (\ref{e9}), we see that $$\chi_{\operatorname{reg}}(\Lambda^{\{2\}})=8p^5([p-1]!)^2\mu.$$
So we need to compute the constant term $\mu$ of \(P(K)^2\).

We work in the Laurent polynomial algebra \(\mathbbm{k}[K,K^{-1}]\) with \(K^{2p}=1\). In this setting, any Laurent polynomial $f(K)$ in \(\mathbbm{k}[K,K^{-1}]\) has the form \(f(K)=\sum_{n=0}^{2p-1} a_n K^n\).  The constant term $a_0$ of \(f(K)=\sum_{n=0}^{2p-1} a_n K^n\) can be extracted by averaging over the \(2p\)-th roots of unity. This follows from the discrete Fourier transform:
\[
a_0=\frac{1}{2p}\sum_{m=0}^{2p-1} f(q^m),
\]
where \(q\) is a primitive \(2p\)-th root of unity. Applying this to $f(K)=P(K)^2$, we have
\begin{equation}\label{e8}
\mu=\frac{1}{2p}\sum_{m=0}^{2p-1} P(q^m)^2.
\end{equation}

Now, we evaluate \(P(q^m)\) for  \(0 \le m \le 2p-1\). Substituting \(K=q^m\), we get
\[
P(q^m)=\prod_{r=-p}^{-2}\frac{q^{m+r}-q^{-(m+r)}}{q-q^{-1}}
=\prod_{r=-p}^{-2} [m+r]=\prod_{n=m-p}^{m-2} [n].
\]
Recall that \([n]=0\) if and only if \(q^n=q^{-n}\), which is equivalent to \(q^{2n}=1\). Since \(q\) is a  primitive \(2p\)-th root of unity, this occurs if and only if \(2p \mid 2n\), i.e. \(p \mid n\).

Consider the set
\begin{equation}\label{e7}
\{m-p,\ m-p+1,\ \dots,\ m-2\}
\end{equation}
consisting of \(p-1\) consecutive integers. If we add the number \(m-1\), we obtain \(p\) consecutive integers $\{m-p,\ m-p+1,\ \dots,\ m-2,\ m-1\}$ in which there is only one integer divisible by \(p\). Hence, for the product $\prod_{n=m-p}^{m-2} [n]$ to be non-zero, none of the numbers in (\ref{e7}) can be divisible by \(p\). Equivalently, the number $m-1$ must be divisible by \(p\). Thus,   
\[
m-1 \equiv 0 \pmod p.
\]
Within the range \(0 \le m \le 2p-1\), this gives exactly two solutions:
\[
m=1 \quad\text{and}\quad m=p+1.
\]
Hence, \(P(q^m)\neq0\) if and only if $m=1$ or $m=p+1$.

For these two cases $m=1$ and $m=p+1$, we compute:
$$
P(q) = \prod_{n=1-p}^{-1}[n] = \prod_{n=1}^{p-1}[-n] = (-1)^{p-1}\prod_{n=1}^{p-1}[n]= (-1)^{p-1}[p-1]!,
$$
$$
P(q^{p+1}) = \prod_{n=1}^{p-1}[n]=[p-1]!.
$$
Then \(P(q)^2=P(q^{p+1})^2 = ([p-1]!)^2\). Substituting back into the averaging formula (\ref{e8}) yields
\[
\mu = \frac{1}{2p}\left(([p-1]!)^2 + ([p-1]!)^2\right) = \frac{([p-1]!)^2}{p}.
\]
Now
\begin{align*}
\chi_{\operatorname{reg}}(\Lambda^{\{2\}})=8p^5([p-1]!)^2\mu=8p^4([p-1]!)^4.   
\end{align*}
This completes the proof
\qed

\begin{rem}We have the ratio
\begin{align*}
\frac{\chi_{\operatorname{reg}}(\Lambda^{\{2\}})}{\chi_{V}(\Lambda^{\{2\}})}=\frac{8p^4([p-1]!)^4}{4p^3([p-1]!)^4}=2p,
\end{align*}
which is a gauge invariant of $\overline{U}_q(sl_2)$.
\end{rem}

\section*{Acknowledgement}
The second author was supported by the National Natural Science Foundation of China (Grant No. 12371041).

\end{document}